\let\Im=\undefined
\let\mod=\undefined
\DeclareMathOperator{\GL}{GL}
\DeclareMathOperator{\Im}{Im}
\DeclareMathOperator{\rk}{rk}
\DeclareMathOperator{\Ext}{Ext}
\DeclareMathOperator{\Hom}{Hom}
\DeclareMathOperator{\Ker}{Ker}
\DeclareMathOperator{\mod}{mod}
\DeclareMathOperator{\rep}{rep}
\DeclareMathOperator{\supp}{supp}
\DeclareMathOperator{\Coker}{Coker}
\newcommand{\bd}{\mathbf{d}}
\newcommand{\be}{\mathbf{e}}
\newcommand{\bone}{\mathbf{1}}
\newcommand{\bzero}{\mathbf{0}}
\newcommand{\bbA}{\mathbb{A}}
\newcommand{\bbM}{\mathbb{M}}
\newcommand{\bbN}{\mathbb{N}}
\newcommand{\bbZ}{\mathbb{Z}}
\newcommand{\calE}{\mathcal{E}}
\newcommand{\calH}{\mathcal{H}}
\newcommand{\calM}{\mathcal{M}}
\newcommand{\calU}{\mathcal{U}}
\newcommand{\calX}{\mathcal{X}}
\newcommand{\calY}{\mathcal{Y}}
\newtheorem{theo}{Theorem}[section]
\newtheorem{coro}[theo]{Corolary}
\newtheorem{lemm}[theo]{Lemma}
\newtheorem{prop}[theo]{Proposition}
\title[Two vertex geometrically irreducible algebras]{Two vertex geometrically irreducible algebras}
\author{Grzegorz Bobi\'nski}
\address{Grzegorz Bobi\'nski\newline
Faculty of Mathematics and Computer Science,\newline
Nicolaus Copernicus University\newline
ul. Chopina 12/18\newline
87-100 Toru\'n\newline
Poland}
\email{gregbob@mat.umk.pl}
\author{Grzegorz Zwara}
\address{Grzegorz Zwara\newline
Faculty of Mathematics and Computer Science\newline
Nicolaus Copernicus University\newline
ul. Chopina 12/18\newline
    87-100 Toru\'n\newline
Poland}
\email{gzwara@mat.umk.pl}
\subjclass{16G20, 14R20}
\begin{document}

\maketitle

\begin{abstract}
We complete a classification of the two-vertex geometrically irreducible algebras. We also classify the algebras in new classes of hom- and ext-irreducible algebras.
\end{abstract}

\section{Introduction and main result} \label{sect:intro}

Throughout the paper $\Bbbk$ denotes an algebraically closed field of arbitrary characteristic.

Given a finite dimension $\Bbbk$-algebra $A$ and a nonnegative integer $d$ we denote by $\mod_A (d)$ the variety of $d$-dimensional $A$-modules, which consists of the $\Bbbk$-algebra homomorphisms $A \to \bbM_{d \times d} (\Bbbk)$ and parameterizes $A$-modules of dimension $d$. The algebra $A$ is called geometrically irreducible if the connected components of $\mod_A (d)$ are irreducible, for all $d$. Geometrically irreducible algebras have been studied in~\cites{BobSch2019a, BobSch2019b, Bob2021a}. In particular, it is conjectured in~\cite{BobSch2019b} that every geometrically irreducible algebra is a gluing (in a sense explained in~\cite{BobSch2019b}) of algebras with at most two simples. This conjecture has been confirmed for the algebras without shortcuts (\cite{BobSch2019b}*{Theorem~1.2}). Consequently, the geometrically irreducible algebras with at most two simples seem to be of particular interest. The aim of this paper is to complete the classification of this class of algebras initiated in~\cites{BobSch2019b, Bob2021a}.

We introduce some families of algebras. For $n \geq 0$, let $Q (n)$ be the quiver
\[
\xymatrix{0 \ar@(ul,dl)_{\varepsilon_0} & 1 \ar@(ur,dr)^{\varepsilon_1} \ar@<-1.5ex>_{\alpha_1}[l] \ar@<1.5ex>^{\alpha_n}_{\vdots}[l]}.
\]
For $l \geq 1$ (and $n \geq 1$), let
\[
\rho^{(l)} := \sum_{i = 0}^l \varepsilon_0^{l - i} \alpha_1 \varepsilon_1^i.
\]
For $m \geq 2$ and $n, l \geq 1$, let $A (n, m, l)$ be the path algebra of the quiver $Q (n)$ bounded by $\varepsilon_0^m$, $\varepsilon_1^m$ and $\rho^{(l)}$. Similarly, for $m_0, m_1 \geq 1$ and $n \geq 0$, let $A' (n, m_0, m_1)$ be the path algebra of the quiver $Q (n)$ bounded by $\varepsilon_0^{m_0}$ and $\varepsilon_1^{m_1}$.

The following theorem is the first main result of the paper.

\begin{theo} \label{theo:main}
Let $A$ be an algebra which has exactly two simples. Then $A$ is geometrically irreducible if and only if $A$ is Morita equivalent to one of the following algebras:
\begin{enumerate}

\item
$A (n, m, 1)$, for some $m \geq 2$ and $n \geq 1$, or

\item
$A (n, m, m - 1)$, for some $m \geq 2$ and $n \geq 1$, or

\item
$A' (n, m_0, m_1)$, for some $m_0, m_1 \geq 1$ and $n \geq 0$.

\end{enumerate}
\end{theo}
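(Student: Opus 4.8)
The plan is to reduce the statement to the irreducibility of representation varieties and then treat the two implications separately. After replacing $A$ by a basic algebra, I write $A=\Bbbk Q/I$ for a two-vertex quiver $Q$ and an admissible ideal $I$, and recall that $A$ is geometrically irreducible if and only if every connected component of $\rep_A(\bd)$ is irreducible, for all dimension vectors $\bd$. This follows because $\mod_A(\bd)$ is an associated fibre bundle over $\rep_A(\bd)$ with irreducible fibre $\GL_d/\prod_i\GL_{d_i}$, while the pieces $\mod_A(\bd)$ with $|\bd|=d$ are exactly the open and closed subsets of $\mod_A(d)$ determined by the dimension vector, so irreducible components and connected components correspond on both sides. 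For all of the algebras in the list the defining relations are homogeneous in path length, so $\rep_A(\bd)$ is conical, hence connected, and geometric irreducibility amounts to the plain irreducibility of every $\rep_A(\bd)$.

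For the implication $(\Leftarrow)$ I argue directly. For $A'(n,m_0,m_1)$ the variety $\rep_A(\bd)$ with $\bd=(d_0,d_1)$ splits as a product
\[
\{E_0\in\bbM_{d_0\times d_0}(\Bbbk):E_0^{m_0}=0\}\times\{E_1\in\bbM_{d_1\times d_1}(\Bbbk):E_1^{m_1}=0\}\times\bbM_{d_0\times d_1}(\Bbbk)^n,
\]
and each matrix factor $\{E^m=0\}$ is the closure of the nilpotent orbit attached to the largest partition, in the dominance order, all of whose parts are at most $m$, hence irreducible; a product of irreducible varieties is irreducible. For $A(n,m,1)$ and $A(n,m,m-1)$ only the arrow $\alpha_1$ is constrained, so $\rep_A(\bd)=Z\times\bbM_{d_0\times d_1}(\Bbbk)^{n-1}$ with $Z=\{(E_0,E_1,A_1):E_0^m=E_1^m=0,\ \sum_{i=0}^{l}E_0^{l-i}A_1E_1^i=0\}$. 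Here I project $Z$ onto the irreducible base $\{E_0^m=0\}\times\{E_1^m=0\}$ and study the linear fibre $L(E_0,E_1)$ cut out by $\rho^{(l)}$; the point of the boundary values $l\in\{1,m-1\}$ is that the generic fibre dimension is attained on a dense open subset and the loci where it jumps contribute strictly smaller total dimension, so that $Z$ is the closure of the (irreducible) vector bundle over the generic base and is therefore irreducible.

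For the implication $(\Rightarrow)$ I first pin down the quiver. Every path from a vertex to itself is a power of the corresponding loop, so the corner algebras are $e_iAe_i\cong\Bbbk[\varepsilon_i]/(\varepsilon_i^{m_i})$ and $Q$ carries at most one loop at each vertex; two loops at a vertex, or arrows in both directions, already make some $\rep_A(\bd)$ reducible, exactly as for pairs of commuting nilpotent matrices, so after passing to $A^{\mathrm{op}}$ if necessary one is left with $Q=Q(n)$. It then remains to determine $I$. If $I$ contains no relation supported on the arrows, then $A\cong A'(n,m_0,m_1)$. Otherwise, analysing $e_0(\Bbbk Q)e_1$, which is spanned by the paths $\varepsilon_0^a\alpha_j\varepsilon_1^b$, I show that a nontrivial arrow relation forces $m_0=m_1=:m$ and, after a linear change of the arrows $\alpha_1,\dots,\alpha_n$, can be normalised to $\rho^{(l)}$ for a single value $l$.

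The main obstacle is the final, sharp step: showing that among the algebras $A(n,m,l)$ exactly the boundary values $l=1$ and $l=m-1$ are geometrically irreducible. For an intermediate value $2\le l\le m-2$ I would exhibit a dimension vector $\bd$ for which $\rep_A(\bd)$ carries at least two irreducible components of the same dimension, by constructing two distinct families of modules that differ in the Jordan type which $\rho^{(l)}$ permits for the pair $(E_0,E_1)$ acting through $\alpha_1$, and by checking that neither family degenerates into the other. This must be matched precisely to the fibre dimension estimates of the $(\Leftarrow)$ argument, so that the threshold comes out to be exactly $\{1,m-1\}$; making these estimates land on the correct side of the threshold uniformly in $n$ and $m$ is the technical heart of the proof.
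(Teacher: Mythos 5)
Your proposal has genuine gaps in both directions, and the most serious one sits exactly where the paper's new content lies. For the implication $(\Leftarrow)$, the cases $A(n,m,1)$ and especially $A(n,m,m-1)$ are not actually proved. Your plan --- project $Z$ onto the irreducible variety of nilpotent pairs $(E_0,E_1)$, note that over a dense open subset the solutions of $\rho^{(l)}$ form a vector bundle, and conclude that $Z$ is the closure of that bundle because ``the loci where the fibre dimension jumps contribute strictly smaller total dimension'' --- is not a valid inference as stated: an irreducible component of $Z$ lying entirely over the jump locus is not excluded merely by having smaller dimension than the generic part. To make this pattern work you would need a lower bound on the dimension of \emph{every} irreducible component of $Z$ (say via Krull's height theorem, since $Z$ is cut out by the $d_0d_1$ entries of $\sum_{i=0}^{l} E_0^{l-i}A_1E_1^{i}=0$ inside an irreducible ambient variety), and then show that the locus over the degenerate strata falls strictly below that bound. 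You acknowledge that these estimates are ``the technical heart'' and leave them undone --- but this is precisely the hard part, and it cannot be soft: the identical generic-bundle picture exists for \emph{every} $l$, including $2\leq l\leq m-2$, where $A(n,m,l)$ is \emph{not} geometrically irreducible, so any correct argument must use $l\in\{1,m-1\}$ in an essential way that your outline never identifies. Similarly, for $(\Rightarrow)$ you compress into a few asserted sentences (one relation suffices, it normalises to $\rho^{(l)}$, intermediate $l$ can be killed by two non-degenerating families ``I would exhibit'') what is in fact the content of an entire prior paper.

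The paper's architecture is quite different and explains the threshold structurally. It does not reprove necessity: it cites \cite{BobSch2019b}*{Theorem~1.1} for $(\Rightarrow)$, and \cite{BobSch2019b} and \cite{Bob2021a} for sufficiency in cases (1) and (3), so the only new statement is Theorem~\ref{theo:mainbis}, the geometric irreducibility of $A(n,m,m-1)$. That statement is proved not by dimension counts but by a transfer mechanism: Lemma~\ref{lemm:4.1} identifies $\rep_{A(1,m,1)}((d,e))$ with the Hom-variety $\calH_\Lambda(e,d)$ and $\rep_{A(1,m,m-1)}((d,e))$ with the cocycle (Ext-) variety $\calE_\Lambda(e,d)$, where $\Lambda=\Bbbk[x]/(x^m)$; Propositions~\ref{prop:3.1} and~\ref{prop:3.2} (hom-irreducible implies mono-irreducible, and mono-irreducible is equivalent to ext-irreducible, via Bongartz's construction) then carry the known irreducibility for $A(1,m,1)$ over to $A(1,m,m-1)$, and $n>1$ only adds an affine factor. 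In other words, $l=1$ and $l=m-1$ are special because they encode $\Hom$ and $\Ext$ over $\Bbbk[x]/(x^m)$, respectively --- this is the insight your proposal is missing, and without it (or without genuinely completing your estimates) the proof does not close.
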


The main result of~\cite{BobSch2019b} (Theorem~1.1) states that if $A$ is a geometrically irreducible algebra with exactly two simples, then (up to Morita equivalence) $A$ is one of the algebras listed in Theorem~\ref{theo:main}. On the other hand, it is relatively easy to see that the algebras $A' (n, m_0, m_1)$, $m_0, m_1 \geq 1$ and $n \geq 0$, are geometrically irreducible (see for example~\cite{BobSch2019b}*{Proposition~3.3}). Moreover, \cite{Bob2021a}*{Corollary~2.1} implies that the algebras $A (n, m, 1)$, $m \geq 2$ and $n \geq 1$, are also geometrically irreducible. Thus in order to prove Theorem~\ref{theo:main} it suffices to prove the following.

\begin{theo} \label{theo:mainbis}
If $m \geq 2$ and $n \geq 1$, then the algebra $A (n, m, m - 1)$ is geometrically irreducible.
\end{theo}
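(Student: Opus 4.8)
The plan is to reduce Theorem~\ref{theo:mainbis} to an irreducibility statement about a single explicit fibred affine variety, and then to establish that irreducibility by a stratification and dimension count. Write $A = A(n,m,m-1)$. By the standard associated-fibre-bundle description of $\mod_A(d)$ over the representation varieties $\rep_A(\bd)$ of fixed dimension vector $\bd = (d_0,d_1)$ (the fibres being the irreducible homogeneous space $\GL_d / (\GL_{d_0}\times\GL_{d_1})$), it suffices to show that the connected components of each $\rep_A(\bd)$ are irreducible. A point of $\rep_A(\bd)$ is a tuple $(E_0, E_1, A_1, \ldots, A_n)$ with $E_0 \in \bbM_{d_0\times d_0}(\Bbbk)$, $E_1 \in \bbM_{d_1\times d_1}(\Bbbk)$ satisfying $E_0^m = 0 = E_1^m$, with $A_j \in \bbM_{d_0\times d_1}(\Bbbk)$, subject only to $\sum_{i=0}^{m-1} E_0^{m-1-i} A_1 E_1^i = 0$. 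Since $A_2,\ldots,A_n$ are unconstrained, $\rep_A(\bd) \cong Z(\bd)\times\bbA^{(n-1)d_0 d_1}$, where $Z(\bd) = \rep_{A(1,m,m-1)}(\bd)$; as a product with an affine space preserves the property that connected components are irreducible, it suffices to treat $Z(\bd)$.

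I would then set up the fibration. Let $\mathcal{N}_d \subseteq \bbM_{d\times d}(\Bbbk)$ be the variety of matrices $E$ with $E^m = 0$. It is the closure of the nilpotent orbit of Jordan type $(m^q, r)$, where $d = qm + r$ with $0 \le r < m$, since that partition dominates every partition of $d$ with all parts at most $m$; in particular $\mathcal{N}_d$ is irreducible. Consider $\pi\colon Z(\bd)\to \mathcal{N}_{d_0}\times\mathcal{N}_{d_1}$, $(E_0,E_1,A)\mapsto(E_0,E_1)$. For fixed $(E_0,E_1)$ the relation is linear in $A$, so the fibre is the kernel of $\Phi_{E_0,E_1}\colon \bbM_{d_0\times d_1}(\Bbbk)\to\bbM_{d_0\times d_1}(\Bbbk)$, $\Phi(A) = \sum_{i=0}^{m-1} E_0^{m-1-i} A E_1^i$. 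The base is irreducible and $Z(\bd)$ carries the zero section $(E_0,E_1)\mapsto(E_0,E_1,0)$, whose image is irreducible and meets every (linear, hence connected) fibre; therefore $Z(\bd)$ is connected, and it remains to prove that $Z(\bd)$ is \emph{irreducible}.

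To understand the fibres, I would regard $\bbM_{d_0\times d_1}(\Bbbk) = \Hom_\Bbbk(\Bbbk^{d_1},\Bbbk^{d_0})$ as a module over $\Lambda = \Bbbk[x,y]/(x^m,y^m)$, with $x$ acting by $A\mapsto E_0 A$ and $y$ by $A\mapsto A E_1$. Then $\Phi$ is multiplication by $\phi = \sum_{i=0}^{m-1} x^{m-1-i} y^i$, and $(x-y)\phi = x^m - y^m = 0$ in $\Lambda$, so $\Phi$ is annihilated on both sides by the Sylvester operator $D(A) = E_0 A - A E_1$, that is $D\Phi = \Phi D = 0$. Decomposing $\Bbbk^{d_0}$ and $\Bbbk^{d_1}$ into Jordan blocks for $E_0$ and $E_1$, of sizes $\lambda = (p_a)$ and $\mu = (q_b)$, splits the $\Lambda$-module into rectangular blocks $\Bbbk[x,y]/(x^{p_a}, y^{q_b})$, whence $\dim\Ker\Phi_{E_0,E_1} = \sum_{a,b}\kappa(p_a,q_b)$, where $\kappa(p,q) = \dim_\Bbbk\Ker\bigl(\phi\cdot\bigr)$ on $\Bbbk[x,y]/(x^p,y^q)$. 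This number depends only on $\lambda,\mu$; in particular the fibre dimension is constant along the locus $O_\lambda\times O_\mu$ where $E_0,E_1$ have prescribed Jordan types.

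Finally I would stratify $Z(\bd)$ by Jordan type: over the irreducible locally closed stratum $O_\lambda\times O_\mu$ the map $\pi$ is a vector bundle of rank $F(\lambda,\mu) = \sum_{a,b}\kappa(p_a,q_b)$, so its preimage $Z_{\lambda,\mu}$ is irreducible of dimension $\dim O_\lambda + \dim O_\mu + F(\lambda,\mu)$, with $\dim O_\lambda = d_0^2 - \sum_{i,j}\min(\lambda_i,\lambda_j)$ and likewise for $\mu$. Since $\mathcal{N}_{d_0}\times\mathcal{N}_{d_1}$ has a dense stratum $O_{\lambda_0}\times O_{\mu_0}$ of generic Jordan types $(m^{q_0},r_0)$ and $(m^{q_1},r_1)$, the closure $\overline{Z_{\lambda_0,\mu_0}}$ is an irreducible component, and $Z(\bd)$ is irreducible precisely when this closure contains every other stratum. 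The proof should turn on two points, the second being the main obstacle. First one must control $\kappa(p,q)$: it equals $pq$ when $p+q\le m$ (there $\phi$ vanishes identically on the block) and is given by a more delicate count otherwise, and from this deduce the dimension inequality $\dim O_\lambda + \dim O_\mu + F(\lambda,\mu) \le \dim O_{\lambda_0} + \dim O_{\mu_0} + F(\lambda_0,\mu_0)$, strict for $(\lambda,\mu)\ne(\lambda_0,\mu_0)$. Second, because a strict dimension inequality together with connectedness does not by itself force irreducibility, one must genuinely show $Z_{\lambda,\mu}\subseteq\overline{Z_{\lambda_0,\mu_0}}$; I would do this by constructing, for each $(E_0,E_1,A)\in Z_{\lambda,\mu}$, an explicit one-parameter family $\bigl(E_0(t),E_1(t),A(t)\bigr)$ in $Z(\bd)$ with $(E_0(t),E_1(t))$ of generic Jordan type for $t\ne 0$ and limit $(E_0,E_1,A)$ as $t\to 0$. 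The simultaneous degeneration of the nilpotent pair to an arbitrary Jordan type while dragging a prescribed kernel vector $A$ to the limit is the delicate part, and is where the specific shape of the relation $\rho^{(m-1)}$, rather than an arbitrary $\rho^{(l)}$, must be exploited.
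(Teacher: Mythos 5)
Your reductions are sound as far as they go: passing from $\mod_A(d)$ to the $\rep_A(\bd)$, splitting off the affine factor $\bbA^{(n-1)d_0d_1}$ to reduce $n$ to $1$, identifying $Z(\bd)$ with triples $(E_0,E_1,A)$ satisfying $E_0^m=0=E_1^m$ and $\sum_{i=0}^{m-1}E_0^{m-1-i}AE_1^i=0$, and the connectedness argument via the zero section are all correct (and the first two steps agree with the paper). But the proof stops exactly where the theorem begins. Your plan reduces irreducibility of $Z(\bd)$ to two claims that are never established: (a) the dimension inequality $\dim O_\lambda+\dim O_\mu+F(\lambda,\mu)\le\dim O_{\lambda_0}+\dim O_{\mu_0}+F(\lambda_0,\mu_0)$, which requires a genuine analysis of $\kappa(p,q)$ in the range $p+q>m$ and is only sketched; and (b) the closure containment $Z_{\lambda,\mu}\subseteq\overline{Z_{\lambda_0,\mu_0}}$, which you correctly observe is not implied by (a) plus connectedness, and for which you offer only the intention of constructing one-parameter degenerations that simultaneously deform the nilpotent pair to generic Jordan type while dragging along a kernel vector $A$. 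Claim (b) is equivalent to the theorem itself (given irreducibility, the preimage of the open generic stratum is dense, and conversely), and there is no indication of how these families would be written down; degenerating a pair of commuting-type constraints together with an intertwiner subject to the relation $\rho^{(m-1)}$ is precisely the hard geometric content. So the proposal is a setup with the core missing, not a proof.

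It is worth seeing how the paper sidesteps this difficulty entirely. It observes that your variety $Z(\bd)$ is exactly the extension variety $\calE_\Lambda(e,d)$ for $\Lambda=\Bbbk[x]/(x^m)$: the relation $\sum_{i=0}^{m-1}E_0^{m-1-i}AE_1^i=0$ is the cocycle condition $Z^{U,V}_{\varepsilon^m}=0$, so points of $Z(\bd)$ are modules $V$, $U$ over $\Lambda$ together with an extension datum (Lemma~4.1(2)). It then proves a general principle (Proposition~3.2, by a Bongartz-style construction of surjections from irreducible varieties): an algebra is ext-irreducible if and only if it is mono-irreducible, and hom-irreducibility implies both. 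Finally, hom-irreducibility of $\Lambda$ is imported rather than reproved: $\calH_\Lambda(e,d)\cong\rep_{A'}((d,e))$ where $A'\simeq A(1,m,1)$, whose geometric irreducibility is the main result of the earlier Birkhoff-variety paper \cite{Bob2021a}. In other words, the stratification-and-degeneration work you propose to do by hand is exactly the content that \cite{Bob2021a} and Proposition~3.2 together replace; if you want to complete your direct approach, you should expect the degeneration constructions to be at least as involved as the arguments in that reference.
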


The paper is organized as follows. In Section~\ref{sect:pre} we recall basic definitions, while in Section~\ref{sect:heirr} we introduce classes of hom- and ext-irreducible algebras, which play crucial role in our proof of the main result presented in Section~\ref{sect:proof}. The final Section~\ref{sect:class} contains a classification of the hom- and ext-irreducible algebras.

The both authors gratefully acknowledge the support of the National Science Centre grant no.~2020/37/B/ST1/00127.

\section{Preliminaries} \label{sect:pre}

Throughout the paper, by $\bbZ$, $\bbN$, and $\bbN_+$ we denote the sets of integers, nonnegative integers, and positive integers, respectively. If $i, j \in \bbZ$, then $[i, j]$ denotes the set of all $l \in \bbZ$ such that $i \leq l \leq j$.

A quiver $Q$ consists of a finite set $Q_0$ of vertices, a finite set $Q_1$ of arrows, and two maps $s, t \colon Q_1 \to Q_0$, which assign to an arrow
$\alpha \in Q_1$ its starting and terminating vertex, respectively. An arrow $\alpha \in Q_1$ is called a loop if $s \alpha = t \alpha$. If $\alpha$ is an arrow in a quiver $Q$, then we define the degree $\deg \alpha$ of $\alpha$ by $\deg \alpha := 0$, if $\alpha$ is a loop, and $\deg \alpha := 1$, otherwise. We denote by $Q_1^{(i)}$ the set of arrows of degree $i$, $i = 0, 1$ (in particular, $Q_1^{(0)}$ is the set of loops in $Q$). By a path of length $l \in \bbN_+$ in $Q$ we mean a sequence $\sigma = \alpha_1 \cdots \alpha_l$ of arrows such that $s \alpha_i = t \alpha_{i + 1}$, for all $i \in [1, l - 1]$. We put $s \sigma := s \alpha_l$ and $t \sigma := t \alpha_1$, and call them the starting and the terminating vertex of $\sigma$, respectively. By the degree $\deg \sigma$ of $\sigma$ we mean $\sum_{i = 1}^l \deg \alpha_i$. By an oriented cycle we mean a path $\sigma$ of positive length such that $s \sigma = t \sigma$. For each vertex $x$ of $Q$ we also consider the trivial path $\bone_x$ at vertex $x$ (i.e., $s \bone_x := x =: t \bone_x)$ of length (and degree) $0$. If $X \subseteq Q_0$, then we put $\bone_X := \sum_{x \in X} \bone_x$.

With a quiver $Q$ we associate its path algebra $\Bbbk Q$, which as a $\Bbbk$-vector space has a basis formed by all paths in $Q$ and whose multiplication is induced by the composition of paths. By a relation $\rho$ in $Q$ we mean a linear combination of paths of length at least $2$ with the same starting and terminating vertex (denoted by $s \rho$ and $t \rho$, respectively). If $\rho = \sum_{i = 1}^n \lambda_i \sigma_i$ is a relation, where $\lambda_1$, \ldots, $\lambda_n$ are nonzero scalars and $\sigma_1$, \ldots, $\sigma_n$ pairwise different paths, the by the degree $\deg \rho$ of $\rho$ we mean $\min \{ \deg \sigma_i : \text{$i \in [1, n]$} \}$. By a bound quiver we mean a pair $(Q, I)$ consisting of a quiver $Q$ and an ideal $I$ of $\Bbbk Q$ generated by relations, such that there exists $n \in \bbN_+$ with $\sigma \in I$, for each path $\sigma$ of length (at least) $n$. If $(Q, I)$ is a bound quiver, then we call $\Bbbk Q / I$ the path algebra of $(Q, I)$. If $A$ is the path algebra of a bound quiver $(Q, I)$ and the ideal $I$ is generated by relations $\rho_1$, \ldots, $\rho_n$, then we also say that $A$ is the path algebra of $Q$ bounded by $\rho_1$, \ldots, $\rho_n$.

If $A$ is an algebra, then there exists a bound quiver $(Q, I)$ such that $A$ and $\Bbbk Q / I$ are Morita equivalent. Moreover, it follows from~\cite{Bon1991} that $A$ is geometrically irreducible if and only if $\Bbbk Q / I$ is geometrically irreducible.

Thus from now on we concentrate on path algebras of bound quivers. The quiver $Q$ is uniquely determined by $A$ and is called the Gabriel quiver of $A$. The algebra $A$ is called weakly triangular, if there are no oriented cycles of positive degree in $Q$.

Let $(Q, I)$ be a bound quiver. By a representation of $(Q, I)$ we mean a collection $V$ of finite dimensional $\Bbbk$-vector spaces $V_x$, $x \in Q_0$, together with $\Bbbk$-linear maps $V_\alpha \colon V_{s \alpha} \to V_{t \alpha}$, $\alpha \in Q_1$, such that $V_\rho$ (defined in a natural way) vanishes for all relations $\rho \in I$. The collection $(\dim_k V_x) \in \bbN^{Q_0}$ is called the dimension vector of $V$ (and the elements of $\bbN^{Q_0}$ are called dimension vectors). If $U$ and $V$ are representations of $(Q, I)$, then a homomorphism $f \colon V \to U$ is a collection of $\Bbbk$-linear maps $f_x \colon V_x \to U_x$, $x \in Q_0$, such that $U_\alpha \circ f_{s \alpha} = f_{t \alpha} \circ V_\alpha$, for each $\alpha \in Q_1$. If $A$ is the path algebra of $(Q, I)$ then the category $\mod A$ of (finite dimensional left) $A$-modules is equivalent to the category $\rep (Q, I)$ of representations of $(Q, I)$. By abuse of language we call objects of $\rep (Q, I)$ representations of $A$. If $x \in Q_0$ then we denote by $S_A (x)$ the simple representation at vertex $x$, i.e., $S_A (x)_x := \Bbbk$, $S_A (x)_y := 0$, for $y \neq x$, and $S_A (x)_\alpha := 0$, for $\alpha \in Q_1$.

According to~\cite{Bon1991} the above equivalence has its geometric counterpart. Namely, let $A$ be the path algebra of a bound quiver $(Q, I)$. For a dimension vector $\bd$ we denote by $\rep_A (\bd)$ the set of representations $V$ of $A$ such that $V_x = \Bbbk^{d_x}$, for each $x \in Q_0$. One easily observes that $\rep_A (\bd)$ can be identified with a closed subset of $\prod_{\alpha \in Q_1} \bbM_{d_{t \alpha} \times d_{s \alpha}} (\Bbbk)$, thus carries a structure of an affine variety. Then results of~\cite{Bon1991} imply that $A$ is geometrically irreducible if and only if $\rep_A (\bd)$ is irreducible, for each $\bd \in \bbN^{Q_0}$. More precisely, if $d$ is a nonnegative integer then the connected components of $\mod_A (d)$ are (isomorphic to) the associated fiber bundles $\GL (d) \times_{\GL (\bd)} \rep_A (\bd)$, for the dimension vectors $\bd$ such that $\sum_{x \in Q_0} d_x = d$, where $\GL (\bd) := \prod_{x \in Q_0} \GL (d_x)$.

Let $A$ be the path algebra of a bound quiver $(Q, I)$ and $\bd$ a dimension vector. The group $\GL (\bd)$ acts on $\rep_A (\bd)$ by conjugation, i.e., if $g \in \GL (\bd)$ and $V \in \rep_A (\bd)$, then $(g \ast V)_\alpha := g_{t \alpha} V_\alpha g_{s \alpha}^{-1}$, for $\alpha \in Q_1$. Note that $g \in \GL (\bd)$ can be viewed as an isomorphism $g \colon V \to g \ast V$.

We finish this subsection by listing basic properties of geometrically irreducible algebras proved in~\cite{BobSch2019a}*{Section~3}.

\begin{lemm} \label{lemm:geoirr}
The following hold for a geometrically irreducible algebra $A$ with the Gabriel quiver $Q$.
\begin{enumerate}

\item \label{point:geoirr1}
$A$ is weakly triangular.

\item \label{point:geoirr2}
For each vertex $x \in Q_0$ there exists at most one loop $\alpha \in Q_1$ with $s \alpha = x$.

\item \label{point:geoirr3}
If $|Q_0| = 1$, then $A \simeq k [x] / (x^m)$. \qed

\end{enumerate}
\end{lemm}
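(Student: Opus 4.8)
The plan is to use the geometric criterion recalled above, namely that $A$ is geometrically irreducible if and only if $\rep_A (\bd)$ is irreducible for every $\bd \in \bbN^{Q_0}$. Parts \eqref{point:geoirr1} and \eqref{point:geoirr2} I would establish by contraposition, producing for each forbidden configuration in $Q$ an explicit dimension vector $\bd$ for which $\rep_A (\bd)$ is reducible; part \eqref{point:geoirr3} then follows from part \eqref{point:geoirr2} together with admissibility of the bounding ideal.

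For \eqref{point:geoirr1}, suppose $Q$ contains an oriented cycle of positive degree and pick one of minimal length, say $C \colon x_0 \to x_1 \to \cdots \to x_{\ell - 1} \to x_0$. Minimality forces the vertices $x_0, \ldots, x_{\ell - 1}$ to be pairwise distinct and the cycle to be chordless (any chord would yield a shorter oriented cycle), while positivity of the degree forces $\ell \geq 2$, since a degree-zero cycle consists of a single loop. I would take $\bd$ with $d_{x_i} = 1$ for all $i$ and $d_x = 0$ otherwise. Then $\rep_A (\bd)$ is a closed subvariety of the affine space $\bbA^\ell$ coordinatised by the scalars $a_i$ attached to the cycle arrows: every loop at some $x_i$ is forced to vanish by admissibility, every arrow meeting a vertex outside the cycle maps to or from a zero space, and there are no further arrows among the $x_i$ by chordlessness. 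Each relation of $A$ restricts on this $\bd$ to a combination of the monomials coming from the arcs of $C$, and a power of the cyclic path around $C$ lies in $I$, yielding the equation $a_0 a_1 \cdots a_{\ell - 1} = 0$. I would then show that the whole ideal keeps $\rep_A (\bd)$ reducible, its zero locus meeting each of the $\ell \geq 2$ distinct coordinate hyperplanes $\{ a_i = 0 \}$ in a piece not contained in the union of the others, so that $\rep_A (\bd)$ has at least two irreducible components.

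For \eqref{point:geoirr2}, suppose a vertex $x$ carries two loops $\alpha, \beta$. Choosing $\bd$ supported at $x$ identifies $\rep_A (\bd)$ on the nose with the representation variety $\rep_B (d)$ of the algebra $B$ of the subquiver consisting of $x$ and its loops (with the induced relations), so it suffices to make the latter reducible. Taking the radical-square-zero model $\Bbbk \langle \alpha, \beta \rangle / \mathrm{rad}^2$ as a guide, I would work at $d = 3$ and stratify representations by the rank $r$ of $\Im V_\alpha + \Im V_\beta$. The stratum $r = 1$ (top of dimension $2$, socle of dimension $1$) and the stratum $r = 2$ (top of dimension $1$, socle of dimension $2$) each form an irreducible family of the same dimension; since the dimension of the top is upper semicontinuous under degeneration, neither family lies in the closure of the other, and hence $\rep_B (3)$ is reducible.

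Finally, for \eqref{point:geoirr3}, if $|Q_0| = 1$ then by \eqref{point:geoirr2} the unique vertex carries at most one loop. If there is none, $A \simeq \Bbbk \simeq \Bbbk [x] / (x)$. If there is a loop $\varepsilon$, then $\Bbbk Q = \Bbbk [\varepsilon] \simeq \Bbbk [x]$, the ideal $I$ is contained in $(x^2)$ because it is generated by relations of length at least $2$, and admissibility gives $x^N \in I$ for some $N$; as $\Bbbk [x]$ is a principal ideal domain, $I = (x^m)$ for some $m \geq 2$, whence $A \simeq \Bbbk [x] / (x^m)$. The main obstacle I anticipate lies in the uniformity of \eqref{point:geoirr1} and \eqref{point:geoirr2} over all admissible ideals: the radical-square-zero computations exhibit the mechanism, but the defining equations of $\rep_A (\bd)$ involve all relations of $A$, so the delicate point is to check that the longer relations cannot merge the two families produced above into a single irreducible component. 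Controlling this, most cleanly by exhibiting in each case two full-dimensional constructible families whose generic members carry distinct numerical invariants (radical or socle dimensions), is where the real work is concentrated.
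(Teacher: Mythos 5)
First, note that the paper does not prove this lemma at all: it is quoted, with references, from \cite{BobSch2019a}*{Section~3}, so your proposal can only be judged on its own merits. Parts \eqref{point:geoirr1} and \eqref{point:geoirr3} are essentially fine. In \eqref{point:geoirr1} the assertion that chordlessness leaves ``no further arrows among the $x_i$'' is inaccurate --- minimality excludes chords and (for $\ell > 2$) reverse arrows, but not arrows parallel to the cycle arrows --- yet this is harmless, and the reducibility you defer (``I would then show\dots'') can actually be sealed cheaply: admissibility puts a power of the cycle path in $I$, so $\rep_A (\bd) \subseteq \bigcup_{i} \{ a_i = 0 \}$, while for every non-loop arrow $\gamma$ the coordinate line where only $a_\gamma$ varies lies in $\rep_A (\bd)$ (any path of length at least $2$ either uses two distinct arrows or is a loop power, hence vanishes there); an irreducible set inside a finite union of hyperplanes lies in one of them, contradicting the lines $L_{a_0}, \ldots, L_{a_{\ell - 1}}$. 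Part \eqref{point:geoirr3} is correct given \eqref{point:geoirr2}.

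The genuine gap is in \eqref{point:geoirr2}, and it is exactly the ``delicate point'' you flag at the end: the longer relations \emph{do} merge your two families, so working at $d = 3$ with the radical-square-zero model as a guide fails. Concretely, take $B = \Bbbk \langle \alpha, \beta \rangle / J^3$, where $J$ is the arrow ideal (a perfectly admissible induced ideal; $A$ itself could be this local algebra). Then $\rep_B (3)$ consists of pairs $(X, Y)$ of $3 \times 3$ matrices all of whose words of length $3$ vanish; every word of length at least $3$ then vanishes, so the nonunital algebra generated by $X$ and $Y$ is nilpotent, hence (by the standard flag $V \supseteq N V \supseteq N^2 V \supseteq 0$, or Levitzki's theorem) $X$ and $Y$ are \emph{simultaneously} strictly upper triangularizable --- and conversely. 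Thus $\rep_B (3)$ is the $\GL_3$-saturation of $\mathfrak{n} \times \mathfrak{n}$ ($\mathfrak{n}$ the strictly upper triangular matrices), an irreducible variety of dimension $9$, and it contains both of your $6$-dimensional families, since each of them satisfies the stronger condition that all products of two generators vanish. Note also that upper semicontinuity of the top dimension only shows that neither family's closure contains the other; it never precludes both lying in one larger irreducible component, which is what happens here. More generally, $\rep_{\Bbbk \langle \alpha, \beta \rangle / J^N} (d)$ is irreducible for all $d \leq N$ by the same triangularization argument, so no fixed dimension $d$ can work uniformly: a correct proof must let $d$ grow with the nilpotency index of the loops (or use a different construction altogether, as in the source \cite{BobSch2019a} the paper cites), and this is precisely the missing content of your sketch.
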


\section{Hom-, mono- and ext-irreducible algebras} \label{sect:heirr}

Let $\Lambda$ be an algebra. For dimension vectors $\bd$ and $\be$ we denote by $\calH_\Lambda (\be, \bd)$ the set of triples $(V, W, f)$, such that $V \in \rep_\Lambda (\be)$, $W \in \rep_\Lambda (\bd)$, and $f \in \Hom_\Lambda (V, W)$. Then $\calH_\Lambda (\be, \bd)$ has a natural structure of an affine variety. We denote by $\calM_\Lambda (\be, \bd)$ the subset of $\calH_\Lambda (\be, \bd)$ consisting of the triples $(V, W, f)$ such that $f$ is a monomorphism. Note that $\calM_\Lambda (\be, \bd)$ is an open subset of $\calH_\Lambda (\be, \bd)$. Moreover, $\calM (\be, \bd)$ is nonempty if and only if $\be \leq \bd$ (i.e., $e_x \leq d_x$, for each $x \in Q_0$). We say that $\Lambda$ is hom-irreducible (mono-irreducible) if $\calH_\Lambda (\be, \bd)$ ($\calM_\Lambda (\be, \bd)$, respectively) is irreducible, for all dimension vectors $\bd$ and $\be$ (such that $\be \leq \bd$, respectively). We have the following obvious observation.

\begin{prop} \label{prop:3.1}
If an algebra $\Lambda$ is hom-irreducible, then $\Lambda$ is mono-irreducible. \qed
\end{prop}

In order to define ext-irreducible algebras we need to recall an interpretation of extensions of $\Lambda$-modules in terms of cocycles. Fix the path algebra $\Lambda$ of a bound quiver $(Q, I)$, dimension vectors $\bd$ and $\be$, and $V \in \rep_\Lambda (\bd)$ and $U \in \rep_\Lambda (\be)$. Let $\bbA_Q^{\be, \bd} := \prod_{\alpha \in Q_1} \bbM_{d_{t \alpha} \times e_{s \alpha}} (\Bbbk)$. For each $Z \in \bbA_Q^{\be, \bd}$ we have a representation $W^{V, Z, U}$ of $Q$ defined by: $W^{V, Z, U}_x := \Bbbk^{d_x + e_x} (= \Bbbk^{d_x} \oplus \Bbbk^{e_x})$, for $x \in Q$, and $W^{V, Z, U}_\alpha := \left[
\begin{smallmatrix}
V_\alpha & Z_\alpha \\ 0 & U_\alpha
\end{smallmatrix}
\right]$, for $\alpha \in Q_1$. Given an exact sequence
\[
0 \to V \xrightarrow{f} W \xrightarrow{h} U \to 0
\]
of $\Lambda$-modules, there exists $Z \in \bbA_Q^{\be, \bd}$ and an isomorphism $g \colon W^{V, Z, U} \to W$, such that the following diagram is commutative
\[
\xymatrix{0 \ar[r] & V \ar[r]^-\mu \ar@{=}[d] & W^{V, Z, U} \ar[r]^-\pi \ar[d]^g & U \ar[r] \ar@{=}[d] & 0 \\ 0 \ar[r] & V \ar[r]^f & W \ar[r]^h & U \ar[r] & 0}
\]
where $\mu$ and $\pi$ are the natural maps. Commutativity of the above diagram means in particular that $f = g \circ \mu$. Moreover, if $W \in \rep_\Lambda (\bd + \be)$, then $g \in \GL (\bd + \be)$ and $W = g \ast W^{V, Z, U}$. Let $\bbZ^{U, V}$ be the set of $Z \in \bbA_Q^{\be, \bd}$ such that $W^{V, Z, U} \in \rep_\Lambda (\bd + \be)$. Then $Z \in \bbZ^{U, V}$ if and only if $Z_\rho^{U, V} = 0$, for each relation $\rho \in I$, where
\[
Z_\rho^{U, V} = \sum_{i = 1}^n \sum_{j = 1}^{l_i} \lambda_i V_{\alpha_{i, 1}} \cdots V_{\alpha_{i, j - 1}} Z_{\alpha_{i, j}} U_{\alpha_{i, j + 1}} \cdots U_{\alpha_{i, l_i}},
\]
provided $\rho = \sum_{i = 1}^n \lambda_i \alpha_{i, 1} \cdots \alpha_{i, l_i}$.

Given an algebra $\Lambda$ and dimension vectors $\bd$ and $\be$ we denote by $\calE_\Lambda (\be, \bd)$ the set of triples $(U, V, Z)$ such that $U \in \rep_\Lambda (\be)$, $V \in \rep_\Lambda (\bd)$, and $Z \in \bbZ^{U, V}$. Obviously $\calE_\Lambda (\be, \bd)$ has a structure of an affine variety and we call $\Lambda$ ext-irreducible if the varieties $\calE_\Lambda (\be, \bd)$ are irreducible, for all dimension vectors $\bd$ and $\be$.

The following proposition, whose proof uses a construction from~\cite{Bon1994}, is the main result of this section.

\begin{prop} \label{prop:3.2}
An algebra $\Lambda$ is ext-irreducible if and only if $\Lambda$ is mono-irreducible.
\end{prop}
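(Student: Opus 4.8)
The plan is to prove both implications by relating the extension variety $\calE_\Lambda(\be,\bd)$ to the monomorphism variety $\calM_\Lambda(\be,\bd)$ via the cocycle description recalled just before the statement. The key conceptual point is that an extension $0 \to V \xrightarrow{\mu} W^{V,Z,U} \xrightarrow{\pi} U \to 0$ is, in particular, a monomorphism $\mu \colon V \to W^{V,Z,U}$ whose cokernel is $U$; conversely, any monomorphism $f \colon V \to W$ gives rise to a short exact sequence with cokernel $\Coker f$. So the two varieties parametrize closely related data, but $\calE$ fixes the quotient $U$ while $\calM$ records an arbitrary monomorphism with unspecified (but determined) cokernel. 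The bridge between them should be a suitable group action and an associated fibration.

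First I would make the direction ``ext-irreducible implies mono-irreducible'' precise. Given $(V,W,f) \in \calM_\Lambda(\be,\bd)$ (with $\be \leq \bd$), the cokernel $\Coker f$ is a $\Lambda$-module of dimension vector $\bd - \be$. One can set up a morphism, or a correspondence, from $\calE_\Lambda(\bd-\be,\bd)$ (where the quotient has dimension $\bd-\be$ and the sub has dimension $\be$—I must fix the indexing conventions carefully against the definition of $\bbA_Q^{\be,\bd}$) to $\calM_\Lambda(\be,\bd)$. Concretely, from a cocycle $Z \in \bbZ^{U,V}$ one reads off the monomorphism $\mu \colon V \to W^{V,Z,U}$, and then transporting by $\GL(\bd)$ (acting on the ambient space $\Bbbk^{d_x}$ containing the image) realizes every point of $\calM$ up to the choice of a complement to the image. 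I expect to build a dominant, or even surjective, morphism with irreducible fibers from $\calE_\Lambda$ (possibly after the action of a connected group such as $\GL(\bd-\be) \times \GL(\be)$ together with the affine group of cocycle coboundaries) onto $\calM_\Lambda$; irreducibility of the source plus irreducibility of the fibers then forces irreducibility of the target by the standard fibration argument.

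For the converse, ``mono-irreducible implies ext-irreducible,'' I would run the correspondence in the opposite direction. The variety $\calE_\Lambda(\be,\bd)$ admits a projection to the space of pairs $(U,V)$, and over this base the fiber is the linear space $\bbZ^{U,V}$ of cocycles, cut out by the linear conditions $Z_\rho^{U,V}=0$. The difficulty is that this fiber can jump in dimension as $(U,V)$ varies, so the naive projection need not have irreducible total space even when each fiber is a linear (hence irreducible) space. The remedy is to compare $\calE_\Lambda(\be,\bd)$ with $\calM_\Lambda(\be,\be+\bd)$: a point of $\calM$ is a monomorphism $f \colon V \to W$ with $W$ of dimension $\be+\bd$, and choosing $f$ together with an identification of $\Coker f$ recovers exactly the cocycle data, the discrepancy being precisely the freedom of a $\GL$-action and of the splitting. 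I would exhibit a morphism $\calM_\Lambda(\be,\be+\bd) \to \calE_\Lambda(\be,\bd)$ (or an open-dense piece thereof) that is a locally trivial, or at least irreducible-fiber, quotient by a connected group, so that irreducibility of $\calM$ descends to $\calE$.

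The main obstacle, and where I would spend most of the effort, is handling the group actions and the change of cokernel cleanly enough to guarantee \emph{irreducible fibers} rather than merely constructing set-theoretic bijections. The construction from~\cite{Bon1994} alluded to in the statement is almost certainly the tool for this: it presumably provides a functorial ``universal'' sub-object construction turning the monomorphism picture into the block-upper-triangular cocycle picture in a way compatible with the relations $Z_\rho^{U,V}=0$. The technical core will be verifying that the relevant morphism is surjective with connected (ideally, affine-space or homogeneous-space) fibers, and that the acting groups are connected so that images of irreducible sets stay irreducible; once that bookkeeping is in place, both implications follow from the elementary fact that a variety with an irreducible-fiber surjection from an irreducible total space is itself irreducible.
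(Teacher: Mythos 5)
Your plan is correct and follows essentially the same route as the paper: for one direction the paper surjects $\GL (\bd) \times \calE_\Lambda (\bd - \be, \be)$ onto $\calM_\Lambda (\be, \bd)$ via $(g, (U, V, Z)) \mapsto (V, g \ast W^{V, Z, U}, g \circ \mu)$, and for the other it equips a monomorphism $(V, W, f) \in \calM_\Lambda (\bd, \bd + \be)$ with a choice of complement $h$ making $[f, h]$ invertible, and surjects the resulting space (irreducible, being a nonempty open subset of a vector bundle over the mono variety) onto $\calE_\Lambda (\be, \bd)$ --- exactly your ``monomorphism plus identification of the cokernel'' idea. The one simplification you are missing is that no fiber analysis is needed at all: since the image of an irreducible variety under a morphism is irreducible, surjectivity from an irreducible source already finishes both directions, so the ``irreducible fibers'' bookkeeping you anticipate as the technical core can simply be dropped.
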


\begin{proof}
Assume first that $\Lambda$ is ext-irreducible and fix dimension vectors $\bd$ and $\be$ such that $\be \leq \bd$. Consider the map $\Phi \colon \GL (\bd) \times \calE_\Lambda (\bd - \be, \be) \to \calM_\Lambda (\be, \bd)$ given by:
\[
\Phi (g, (U, V, Z)) := (V, g \ast W^{V, Z, U}, g \circ \mu),
\]
for $g \in \GL (\bd)$, $V \in \rep_\Lambda (\be)$, $U \in \rep_\Lambda (\bd - \be)$, and $Z \in \bbZ^{U, V}$, where $\mu \colon V \to W^{V, Z, U}$ is the natural inclusion. One easily checks that $\Phi$ is a well-defined morphism of varieties. We show that $\Phi$ is onto and this will imply that $\calM_\Lambda (\be, \bd)$ is irreducible as an image of an irreducible set, since $\GL (\bd)$ is irreducible and by assumption $\calE_\Lambda (\bd - \be, \be)$ is irreducible.

Let $(V, W, f) \in \calM_\Lambda (\be, \bd)$ and $U := \Coker f$. Without loss of generality we may assume $U \in \rep_\Lambda (\bd - \be)$. We have an exact sequence
\[
0 \to V \xrightarrow{f} W \to U \to 0.
\]
By remarks preceding the proposition, there exists $Z \in \bbZ^{U, V}$ and $g \in \GL (\bd)$ such that $W = g \ast W^{V, Z, U}$ and $f = g \circ \mu$. In other words, $(V, W, f) = \Phi (g, (U, V, Z))$.

Now assume that $\Lambda$ is mono-irreducible and fix dimension vectors $\bd$ and $\be$. Let $\calY$ be the set of tuples $(V, W, f, h)$ such that $(V, W, f) \in \calM_\Lambda (\bd, \bd + \be)$ and $h \in \prod_{x \in Q_0} \bbM_{(d_x + e_x) \times e_x} (\Bbbk)$, and $\calX$ be the subset of $\calY$ consisting of the tuples $(V, W, f, h)$ such that  the collection $[f, h] := ([f_x, h_x])_{x \in Q_0} \in \prod_{x \in Q_0} \bbM_{(d_x + e_x) \times (d_x + e_x)} (\Bbbk)$ belongs to $\GL (\be + \bd)$. Then $\calY$ is a vector bundle over $\calM_\Lambda (\be, \be + \bd)$ and $\calX$ is a nonempty open subset of $\calY$, hence irreducible.

We have obvious maps $\pi_{1, 1} \colon \rep_\Lambda (\bd + \be) \to \bbA_Q^{\bd, \bd}$, $\pi_{1, 2} \colon \rep_\Lambda (\bd + \be) \to \bbA_Q^{\be, \bd}$, $\pi_{2, 1} \colon \rep_\Lambda (\bd + \be) \to \bbA_Q^{\bd, \be}$, and $\pi_{2, 2} \colon \rep_\Lambda (\bd + \be) \to \bbA_Q^{\be, \be}$, where $Q$ is the Gabriel quiver of $\Lambda$. If $(V, W, f, h) \in \calX$, $g := [f, h]$, and $W' := g^{-1} \ast W$, then $\pi_{1, 1} (W') = V$, $\pi_{2, 1} (W') = 0$, and $\pi_{1, 2} (W') \in \bbZ^{U, V}$, where $U := \pi_{2, 2} (W)$. Consequently, we may define $\Psi \colon \calX \to \calE_\Lambda (\be, \bd)$ by
\[
\Psi (V, W, f, h) := (\pi_{2, 2} ([f, h]^{-1} \ast W), V, \pi_{1, 2} ([f, h]^{-1} \ast W)),
\]
for $(V, W, f, h) \in \calX$. Obviously $\Psi$ is a regular map, which is easily seen to be onto, hence $\calE_\Lambda (\be, \bd)$ is irreducible.
\end{proof}

We formulate the following obvious consequence of Propositions~\ref{prop:3.1} and~\ref{prop:3.2}.

\begin{coro} \label{coro:3.3}
If an algebra $\Lambda$ is hom-irreducible, then $\Lambda$ is ext-irreducible. \qed
\end{coro}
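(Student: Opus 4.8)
The plan is simply to compose the two preceding results, since the corollary asserts nothing beyond their concatenation. First I would assume that $\Lambda$ is hom-irreducible. By Proposition~\ref{prop:3.1}, this immediately gives that $\Lambda$ is mono-irreducible.

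Next I would invoke Proposition~\ref{prop:3.2}, which establishes that ext-irreducibility and mono-irreducibility are equivalent. Only one of the two implications is needed here, namely that mono-irreducibility implies ext-irreducibility. Applying this direction to the mono-irreducible algebra furnished by the first step yields that $\Lambda$ is ext-irreducible, which is precisely the assertion of the corollary.

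There is essentially no obstacle to overcome, as all the substantive content has already been isolated in the two propositions; indeed this is why the statement is flagged as obvious. The only point requiring minor care is that Proposition~\ref{prop:3.2} is phrased as a biconditional, so one must select the correct direction. The implication we use here corresponds to the construction of the surjective regular map $\Psi \colon \calX \to \calE_\Lambda (\be, \bd)$ carried out in the second half of the proof of Proposition~\ref{prop:3.2}, rather than to the map $\Phi$ used for the reverse implication; no further argument beyond this bookkeeping is required.
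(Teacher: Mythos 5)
Your proof is correct and matches the paper exactly: the corollary is stated there as an ``obvious consequence of Propositions~\ref{prop:3.1} and~\ref{prop:3.2}'', which is precisely your composition of hom-irreducible $\Rightarrow$ mono-irreducible $\Rightarrow$ ext-irreducible. Your remark identifying the map $\Psi$ as the half of Proposition~\ref{prop:3.2} actually being used is accurate bookkeeping, nothing more is needed.
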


In Section~\ref{sect:class} we classify the hom- and  ext-irreducible algebras. In particular, we show that the hereditary algebras, which are not semisimple, are ext-irreducible, but not hom-irreducible.

\section{Proof of the main result} \label{sect:proof}

Throughout this section we fix $m \geq 2$. In order to simplify notation we put $A := A (1, m, 1)$ and $B_n := A (n, m, m - 1)$, for $n \in \bbN_+$. Moreover, by $A'$ we denote the path algebra of the quiver
\[
\xymatrix{0 \ar@(ul,dl)_{\varepsilon_0} & 1 \ar@(ur,dr)^{\varepsilon_1} \ar_\alpha[l]}
\]
bounded by $\varepsilon_0^m$, $\varepsilon_1^m$, and $\varepsilon_0 \alpha - \alpha \varepsilon_1$. Finally, we put $\Lambda := k [x] / (x^m)$. Note that the dimension vectors for $\Lambda$ are the elements of $\bbN$, while the dimension vectors for $A$, $A'$ and $B_n$ are the pairs $(d, e)$ with $d, e \in \bbN$. Furthermore, the correspondence given by
\[
\varepsilon_0 \mapsto \varepsilon_0, \alpha \mapsto \alpha_1, \varepsilon_1 \mapsto - \varepsilon_1,
\]
induces an isomorphism $A' \simeq A$. Since the algebra $A$ is geometrically irreducible by \cite{Bob2021a}*{Corollary~1.2}, $A'$ is geometrically irreducible as well.

The crucial point in the proof is the following.

\begin{lemm} \label{lemm:4.1}
Let $d, e \in \bbN$. Then the following hold.
\begin{enumerate}

\item \label{point:hom}
$\rep_{A'} ((d, e)) \cong \calH_\Lambda (e, d)$.

\item \label{point:ext}
$\rep_{B_1} ((d, e)) \cong \calE_\Lambda (e, d)$.

\end{enumerate}
\end{lemm}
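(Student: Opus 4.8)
The plan is to realise both isomorphisms as mere relabellings of matrix coordinates, once one unwinds the defining equations of $\rep_{A'}((d,e))$ and $\rep_{B_1}((d,e))$ and compares them with the definitions of $\calH_\Lambda(e,d)$ and $\calE_\Lambda(e,d)$. Recall that $\Lambda = \Bbbk[x]/(x^m)$ has Gabriel quiver with one vertex and one loop $\varepsilon$ subject to $\varepsilon^m$, so that a point of $\rep_\Lambda(c)$ is simply an endomorphism $X$ of $\Bbbk^c$ with $X^m = 0$. Hence a point of $\calH_\Lambda(e,d)$ is a triple $(X_V, X_W, f)$ with $X_V^m = X_W^m = 0$ and $X_W f = f X_V$, and a point of $\calE_\Lambda(e,d)$ is a triple $(X_U, X_V, Z)$ with $X_U^m = X_V^m = 0$ and $\sum_{i=0}^{m-1} X_V^{m-1-i} Z X_U^i = 0$, the last equation being exactly the single cocycle condition $Z_{\varepsilon^m}^{U,V} = 0$ defining $\bbZ^{U,V}$.

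For part~\ref{point:hom}, I would write a point $M$ of $\rep_{A'}((d,e))$ as the triple of blocks $(M_{\varepsilon_0}, M_{\varepsilon_1}, M_\alpha)$ with $M_{\varepsilon_0} \in \bbM_{d\times d}(\Bbbk)$, $M_{\varepsilon_1} \in \bbM_{e\times e}(\Bbbk)$ and $M_\alpha \in \bbM_{d\times e}(\Bbbk)$ (the arrow $\alpha$ running from vertex $1$ to vertex $0$). The relations $\varepsilon_0^m$ and $\varepsilon_1^m$ say precisely that $W := (\Bbbk^d, M_{\varepsilon_0})$ and $V := (\Bbbk^e, M_{\varepsilon_1})$ lie in $\rep_\Lambda(d)$ and $\rep_\Lambda(e)$, while the relation $\varepsilon_0\alpha - \alpha\varepsilon_1$ reads $M_{\varepsilon_0} M_\alpha = M_\alpha M_{\varepsilon_1}$, i.e.\ $M_\alpha \in \Hom_\Lambda(V, W)$. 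Thus $M \mapsto (V, W, M_\alpha)$ is a bijection onto $\calH_\Lambda(e,d)$, and being the restriction of the obvious linear isomorphism permuting the three matrix factors of the ambient affine space---which carries the defining equations of one variety exactly onto those of the other---it is an isomorphism of affine varieties.

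For part~\ref{point:ext} the same three blocks occur, the only change being that the relation $\rho^{(m-1)} = \sum_{i=0}^{m-1} \varepsilon_0^{m-1-i} \alpha_1 \varepsilon_1^i$ replaces the commutation relation; evaluated on $M$ it becomes $\sum_{i=0}^{m-1} M_{\varepsilon_0}^{m-1-i} M_\alpha M_{\varepsilon_1}^i = 0$. Setting $V := (\Bbbk^d, M_{\varepsilon_0})$, $U := (\Bbbk^e, M_{\varepsilon_1})$ and $Z := M_\alpha$, this is precisely the condition $Z_{\varepsilon^m}^{U,V} = 0$, so $M \mapsto (U, V, Z)$ maps $\rep_{B_1}((d,e))$ bijectively onto $\calE_\Lambda(e,d)$; as before it is the restriction of a coordinate permutation, hence an isomorphism of varieties.

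There is no genuinely hard step here; the argument is pure bookkeeping, and the only points demanding care are conventions. One must check that the block-upper-triangular model $W^{V,Z,U}$---which makes $V$ the submodule and $U$ the quotient---is compatible with the assignments $V = (\Bbbk^d, M_{\varepsilon_0})$, $U = (\Bbbk^e, M_{\varepsilon_1})$ and with the index order of $\calE_\Lambda(e,d)$, so that $Z = M_\alpha \in \bbM_{d\times e}(\Bbbk)$ lands in the correct off-diagonal block. One must also verify that $Z_\rho^{U,V}$, specialised to the defining relation $\rho = \varepsilon^m$ of $\Lambda$, reproduces $\rho^{(m-1)}$ after the re-indexing $j \mapsto m-j$. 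In part~\ref{point:hom} no sign issue arises: the relation $\varepsilon_0\alpha - \alpha\varepsilon_1$ of $A'$ already carries exactly the minus sign that makes $M_\alpha$ a genuine $\Lambda$-homomorphism, which is precisely why one works with $A'$---obtained from $A$ via $\varepsilon_1 \mapsto -\varepsilon_1$---rather than with $A$ itself. Finally, that the coordinate bijections are morphisms in both directions is immediate, since they are restrictions of linear isomorphisms of the ambient matrix spaces.
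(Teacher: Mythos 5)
Your proof is correct and takes essentially the same approach as the paper: both realize the isomorphisms as the coordinate relabelling $M \mapsto (M_{\varepsilon_1}, M_{\varepsilon_0}, M_{\alpha_1})$, checking that the relations $\varepsilon_0^m$, $\varepsilon_1^m$, $\varepsilon_0\alpha - \alpha\varepsilon_1$ (resp.\ $\rho^{(m-1)}$) translate exactly into the conditions defining $\calH_\Lambda(e,d)$ (resp.\ the cocycle condition $Z^{U,V}_{\varepsilon^m} = \sum_{i=0}^{m-1} V^{m-1-i} Z U^i = 0$ defining $\calE_\Lambda(e,d)$). The only cosmetic difference is that the paper derives the formula for $Z^{U,V}_{\varepsilon^l}$ by induction on $l$ while you re-index the general cocycle formula directly; both are equally valid.
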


\begin{proof}
\eqref{point:hom}~Note that $\Lambda$ is the path algebra of the quiver $Q$ of the form
\[
\xymatrix{0 \ar@(ur,dr)^\varepsilon}
\]
bounded by $\varepsilon^m$. Consequently, $\rep_\Lambda (d)$ consists of the matrices $V \in \bbM_{d \times d} (\Bbbk)$ such that $V^m = 0$. Similarly, $\rep_\Lambda (e)$ consists of the matrices $U \in \bbM_{e \times e} (\Bbbk)$ such that $U^m = 0$. Finally, if $V \in \rep_\Lambda (d)$ and $U \in \rep_\Lambda (e)$, then $\Hom_\Lambda (U, V)$ consists of the matrices $f \in \bbM_{d \times e} (\Bbbk)$ such that $f U = V f$. Consequently, we have an isomorphism $\rep_{A'} ((d, e)) \cong \calH_\Lambda (e, d)$ given by
\[
\rep_{A'} ((d, e)) \ni M \to (M_{\varepsilon_1}, M_{\varepsilon_0}, M_{\alpha_1}) \in \calH_\Lambda (e, d).
\]

\eqref{point:ext}~Note that $\bbA_Q^{e, d} = \bbM_{d \times e} (\Bbbk)$, while $\calE_\Lambda (e, d)$ consists of the triples $(V, U, Z)$ with $V \in \rep_\Lambda (d)$, $U \in \rep_\Lambda (e)$, and $Z \in \bbA_Q^{e, d}$, such that $Z_{\varepsilon^m}^{U, V} = 0$. By easy induction one shows that $Z^{U, V}_{\varepsilon^l} = \sum_{i = 0}^{l - 1} V^{l - 1 - i} Z U^i$, for $l \in \bbN_+$. Consequently the map
\[
\rep_{B_1} ((d, e)) \ni M \to (M_{\varepsilon_1}, M_{\varepsilon_0}, M_{\alpha_1}) \in \calE_\Lambda (e, d)
\]
is easily seen to be an isomorphism.
\end{proof}

We have the following consequences.

\begin{coro} \label{coro:4.2}
The algebra $\Lambda$ is hom- and ext-irreducible.
\end{coro}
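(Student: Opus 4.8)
The plan is to obtain hom-irreducibility directly from Lemma~\ref{lemm:4.1}\eqref{point:hom} and then to deduce ext-irreducibility formally, rather than attempting to analyze the varieties $\calH_\Lambda (e, d)$ and $\calE_\Lambda (e, d)$ by hand.

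First I would recall that, since $\Lambda$ has a single vertex, its dimension vectors are exactly the nonnegative integers, so to prove that $\Lambda$ is hom-irreducible it suffices to show that $\calH_\Lambda (e, d)$ is irreducible for every pair $d, e \in \bbN$. By Lemma~\ref{lemm:4.1}\eqref{point:hom} there is an isomorphism of varieties $\rep_{A'} ((d, e)) \cong \calH_\Lambda (e, d)$. The algebra $A'$ was shown above to be geometrically irreducible (via the isomorphism $A' \simeq A$ together with \cite{Bob2021a}*{Corollary~1.2}), so $\rep_{A'} (\bd)$ is irreducible for every dimension vector $\bd$, in particular for $\bd = (d, e)$. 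Transporting irreducibility across the isomorphism then yields that $\calH_\Lambda (e, d)$ is irreducible for all $d, e$, that is, $\Lambda$ is hom-irreducible.

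For the second assertion the natural temptation is to run the same argument with Lemma~\ref{lemm:4.1}\eqref{point:ext}, using $\rep_{B_1} ((d, e)) \cong \calE_\Lambda (e, d)$; but this would require the geometric irreducibility of $B_1 = A (1, m, m - 1)$, which is exactly the $n = 1$ instance of Theorem~\ref{theo:mainbis} that we are trying to prove, so the reasoning would be circular. Instead I would invoke Corollary~\ref{coro:3.3}: having just established that $\Lambda$ is hom-irreducible, ext-irreducibility of $\Lambda$ follows immediately and unconditionally.

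There is no genuine obstacle here, since all the work has been packaged into the preceding results. The only point requiring a little care is to avoid the circular route through $B_1$ for ext-irreducibility, and to note instead that the implication \emph{hom-irreducible $\Rightarrow$ ext-irreducible} of Corollary~\ref{coro:3.3} becomes available as soon as hom-irreducibility is in hand; indeed, the intended logical flow is the reverse, in that the ext-irreducibility of $\Lambda$ obtained here will later feed back, through Lemma~\ref{lemm:4.1}\eqref{point:ext}, into the geometric irreducibility of $B_1$.
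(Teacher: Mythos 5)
Your proposal is correct and follows exactly the paper's own argument: hom-irreducibility via the isomorphism $\calH_\Lambda (e, d) \cong \rep_{A'} ((d, e))$ of Lemma~\ref{lemm:4.1}\eqref{point:hom} together with the geometric irreducibility of $A'$, and ext-irreducibility via Corollary~\ref{coro:3.3}. Your remark about avoiding the circular route through $B_1$ is also precisely the intended logical flow, since Corollary~\ref{coro:4.3} is deduced afterwards from this result.
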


\begin{proof}
If $d, e \in \bbN$, then $\calH_\Lambda (e, d) \cong \rep_{A'} ((d, e))$ by Lemma~\ref{lemm:4.1}\eqref{point:hom}. Since $A'$ is geometrically irreducible, $\calH_\Lambda (e, d)$ is irreducible, thus $\Lambda$ is hom-irreducible. Ext-irreducibility follows from Corollary~\ref{coro:3.3}.
\end{proof}

\begin{coro} \label{coro:4.3}
The algebra $B_1$ is geometrically irreducible.
\end{coro}

\begin{proof}
Fix a dimension vector $(d, e)$. Since $\Lambda$ is ext-irreducible by Corollary~\ref{coro:4.2} and $\rep_{B_1} ((d, e)) \cong \calE_\Lambda (e, d)$ by Lemma~\ref{lemm:4.1}\eqref{point:ext}, $\rep_{B_1} ((d, e))$ is irreducible and the claim follows.
\end{proof}

\begin{proof}[Proof of Theorem~\ref{theo:mainbis}]
Fix $n \in \bbN_+$. If $(d, e)$ is a dimension vector, then
\[
\rep_{B_n} ((d, e)) \cong \rep_{B_1} ((d, e)) \times \prod_{i = 2}^n \bbM_{d \times e} (\Bbbk).
\]
Since $B_1$ is geometrically irreducible (Corollary~~ \ref{coro:4.3}), $\rep_{B_n} ((d, e))$ is irreducible, thus $B_n$ is geometrically irreducible.
\end{proof}

\section{Classification results} \label{sect:class}

It is natural to ask about a classification of the hom- and ext-irreducible algebras. We solve this problem in this section.

We start with the following remark. One could define hom- and ext-irreducibility for not necessarily basic algebras, using module varieties instead of representation varieties
-- as usual one requires that the connected components of corresponding varieties are irreducible. An analysis of these constructions, based on results of~\cite{Bon1991}, shows that notions of hom- and ext-irreducibility defined in this way coincides with these defined in Section~\ref{sect:heirr}. More precisely, let $A$ be the path algebra of a bound quiver $(Q, I)$ and $\Lambda$ an algebra Morita equivalent to $A$. Assume also that $S_x$, $x \in Q_0$, form a complete set of pairwise nonisomorphic simple $\Lambda$-modules such that the module $S_x$ corresponds to the vertex $x$, for each $x \in Q_0$. For nonnegative integers $d$ and $e$ we define $\calH_\Lambda' (e, d)$ to be the set of triples $(V, W, f)$, such that $V \in \mod_\Lambda (e)$, $W \in \mod_\Lambda (d)$, and $f \in \Hom_\Lambda (V, W)$. Then one shows (using results of~~\cite{Bon1991}*{Section~2}) that the connected components of $\calH_\Lambda' (e, d)$ are the associated fiber bundles $(\GL (e) \times \GL (d)) \times_{(\GL (\be) \times \GL (\bd))} \calH_A (\be, \bd)$, for the dimension vectors $\bd$ and $\be$ such that $\sum_{x \in Q_0} d_x \cdot \dim_\Bbbk S_x = d$ and $\sum_{x \in Q_0} e_x  \cdot \dim_\Bbbk S_x = e$. A similar statement also holds for ext-varieties, which in the module varieties case are defined in~\cite{Bon1994}*{subsection~2.1}. We leave details to the interested reader.

Consequently, we will work further with the path algebras of bound quivers, but formulate the results for arbitrary algebras.

For $m \in \bbN_+$, let $\Lambda_m := k [x] / (x^m)$. We have the following classification of the hom-irreducible algebras.

\begin{theo}
Up to Morita equivalence, the algebras $\Lambda_m$, $m \in \bbN_+$, are precisely the hom-irreducible connected algebras.
\end{theo}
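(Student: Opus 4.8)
The plan is to prove both implications, the forward one being immediate and the reverse one resting on a single well-chosen reducible hom-variety. For the easy direction, $\Lambda_m$ is connected (it is local) and hom-irreducible by Corollary~\ref{coro:4.2}; moreover the $\Lambda_m$ are pairwise non-Morita-equivalent, since $\dim_\Bbbk \Lambda_m = m$. So it remains to show that a connected hom-irreducible algebra $\Lambda$ is Morita equivalent to some $\Lambda_m$, and by the reduction to path algebras of bound quivers I may assume $\Lambda = \Bbbk Q / I$ with Gabriel quiver $Q$.

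First I reduce to a statement about the quiver. Taking $\be = \bzero$ gives $\calH_\Lambda (\bzero, \bd) \cong \rep_\Lambda (\bd)$, the only choices being the zero representation and the zero map; hence hom-irreducibility forces every $\rep_\Lambda (\bd)$ to be irreducible, so $\Lambda$ is geometrically irreducible and Lemma~\ref{lemm:geoirr} applies. If $|Q_0| = 1$, then Lemma~\ref{lemm:geoirr}\eqref{point:geoirr3} identifies $\Lambda$ with some $\Lambda_m$ and we are done. The whole content is therefore to rule out $|Q_0| \geq 2$.

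The key step, the one I would design the argument around, is to exhibit one explicitly reducible $\calH_\Lambda$. Since $\Lambda$ is connected with at least two vertices there is an arrow of degree $1$, say $\alpha \colon x \to y$ with $x \neq y$; write $\alpha_1, \dots, \alpha_n$ ($n \geq 1$) for all arrows from $x$ to $y$, and note that by weak triangularity (Lemma~\ref{lemm:geoirr}\eqref{point:geoirr1}) there are no arrows from $y$ to $x$. I take $\be$ with $e_x = 1$ and $e_z = 0$ otherwise, and $\bd$ with $d_x = d_y = 1$ and $d_z = 0$ otherwise. The point of this choice is that every vertex space is at most one-dimensional: each loop acts nilpotently (as $I$ contains a power of it), hence as $0$ on a line, so $\rep_\Lambda (\be)$ is a single point ($= S_\Lambda (x)$), every length-two path is killed and so every relation acts as zero on representations of dimension vector $\bd$, and $\rep_\Lambda (\bd) \cong \bbA^n$ via the scalars $a_1, \dots, a_n$ of $\alpha_1, \dots, \alpha_n$. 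A homomorphism $S_\Lambda (x) \to W$ is then a single scalar $t$ (its component at $x$), and the only conditions are $a_i \, t = 0$ for all $i$. Hence
\[
\calH_\Lambda (\be, \bd) \cong \{ (a_1, \dots, a_n, t) \in \bbA^{n + 1} : a_i \, t = 0 \text{ for } i \in [1, n] \},
\]
which is the union of the hyperplane $\{ t = 0 \}$ and the line $\{ a_1 = \dots = a_n = 0 \}$, and therefore reducible. This contradicts hom-irreducibility, so $|Q_0| = 1$ after all.

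I expect the only delicate points to be bookkeeping rather than genuine obstacles: checking that at dimension vector $\bd$ all loops vanish and all relations are automatically satisfied (so that $\rep_\Lambda (\bd)$ really is affine space and not a proper subvariety), and verifying that no homomorphism conditions beyond $a_i \, t = 0$ arise, which uses that $S_\Lambda (x)$ is supported only at $x$ and that there are no arrows into $x$ from the support of $\bd$. Conceptually the single idea that makes everything work is the choice of $\be$ and $\bd$ with all entries in $\{0, 1\}$: it trivialises the loops and relations that would otherwise obstruct a uniform dimension count, and isolates the lone jump of $\Hom (S_\Lambda (x), -)$ from $0$ at a generic $W$ to nonzero at the semisimple $W = S_\Lambda (x) \oplus S_\Lambda (y)$, which is exactly what splits $\calH_\Lambda (\be, \bd)$ into two components.
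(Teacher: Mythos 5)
Your proposal is correct and follows essentially the same route as the paper: reduce to geometric irreducibility via $\calH_\Lambda(\bzero,\bd) = \rep_\Lambda(\bd)$, invoke Lemma~\ref{lemm:geoirr}, and rule out a degree-one arrow $x \to y$ using exactly the same dimension vectors ($e_z = \delta_{x,z}$, $d_z = \delta_{x,z}+\delta_{y,z}$), which yields the same reducible variety $\{a_i t = 0,\ i \in [1,n]\}$. The only differences are cosmetic (you add the non-Morita-equivalence of the $\Lambda_m$ and phrase the dichotomy as ruling out $|Q_0|\geq 2$ rather than deriving a contradiction from an arrow between distinct vertices).
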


\begin{proof}
We already know that the algebras $\Lambda_m$, $m \in \bbN_+$, are hom-irreducible (Corollary~\ref{coro:4.2}). It remains to show that they are the only hom-irreducible connected algebras.

Let $(Q, I)$ be a bound quiver such that $A := \Bbbk Q / I$ is a hom-irreducible connected algebra. It is obvious that $A$ is geometrically irreducible ($\rep_A (\bd) = \calH_A (\bzero, \bd)$). Suppose there exists an arrow $\alpha_1 \colon x \to y$ in $Q$ with $x \neq y$. Since $A$ is weakly triangular by Lemma~\ref{lemm:geoirr}\eqref{point:geoirr1}, there are no arrows $y \to x$ in $Q$. Let $\alpha_2, \ldots, \alpha_n \colon x \to y$ be the remaining arrows from $x$ to $y$ in $Q$. Let $\bd$ and $\be$ be the dimension vectors such that
\[
d_z := \delta_{x, z} + \delta_{y, z} \qquad \text{and} \qquad e_z := \delta_{x, z},
\]
for $z \in Q_0$, where $\delta_{u, v}$ is the Kronecker delta.

Fix a loop $\alpha$ in $Q$. Recall that there exists $n \in \bbN_+$ such that $\alpha^n \in I$. Consequently, if $W\in \rep_A (\bd)$, then $W_\alpha^n = 0$. Since $d_z \leq 1$, for each $z \in Q_0$, this implies $W_\alpha = 0$. Similarly, $V_\alpha = 0$, for each $V \in \rep_A (\be)$.

Thus $\calH_A (\be, \bd)$ is isomorphic to the variety
\[
\{ (b, a_1, \ldots, a_n) \in \Bbbk^{n + 1} : \text{$a_1 b = \cdots = a_n b = 0$} \},
\]
which is easily seen to be reducible.

The above implies that $|Q_0| = 1$ (due to connectedness of $A$). Using again geometrical irreducibility of $A$ we get $A \simeq \Lambda_m$, for some $m \in \bbN_+$, by Lemma~\ref{lemm:geoirr}\eqref{point:geoirr3}.
\end{proof}

In order to classify the ext-irreducible algebras, we introduce the corresponding class of algebras. We call an algebra $A$ a loop extension of a hereditary algebra, if $A$ is the path algebra of a quiver $Q$ bound by relations of degree $0$. Note that $A / \langle Q_1^{(0)} \rangle$ is isomorphic to the path algebra of the quiver $(Q_0, Q_1^{(1)})$ (hence hereditary). In other words, $A$ is obtained from a hereditary algebra (i.e., the path algebra of a quiver with no relations) by adding loops and relations involving only these loops. In particular $A$ is weakly triangular. If additionally, for each vertex $x$ of $Q$ there is at most one loop $\alpha$ such that $s \alpha = x$, then we call $A$ a simple loop extension of a hereditary algebra. We show that the ext-irreducible algebras are exactly the simple loop extensions of hereditary algebras.

We start with the following.

\begin{lemm} \label{lemm:ext.irr}
Every hereditary algebra is ext-irreducible.
\end{lemm}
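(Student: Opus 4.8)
The plan is to show that a hereditary algebra is ext-irreducible by invoking Proposition~\ref{prop:3.2}, which reduces ext-irreducibility to mono-irreducibility. So the real task is to prove that every hereditary algebra $H$ is mono-irreducible, i.e.\ that the variety $\calM_H(\be, \bd)$ of monomorphisms is irreducible whenever $\be \leq \bd$. The key structural fact to exploit is that for a hereditary algebra the quiver $Q$ has no relations ($I = 0$), so $\rep_H(\bd) = \prod_{\alpha \in Q_1} \bbM_{d_{t\alpha} \times d_{s\alpha}}(\Bbbk)$ is just a full affine space, hence irreducible. This is the feature that makes the hereditary case tractable, in contrast with the reducibility phenomena seen for genuine arrows in the hom-irreducible classification above.

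First I would set up $\calH_H(\be, \bd)$ explicitly. A point is a triple $(V, W, f)$ where $V = (V_\alpha)$ and $W = (W_\alpha)$ range freely over the respective affine spaces of linear maps (no relations to satisfy) and $f = (f_x)_{x \in Q_0}$ is a collection of linear maps subject to the commutativity conditions $W_\alpha f_{s\alpha} = f_{t\alpha} V_\alpha$ for each arrow $\alpha$. So $\calH_H(\be, \bd)$ is cut out inside an affine space by these bilinear equations. The natural approach is to produce an irreducible variety mapping onto $\calM_H(\be, \bd)$ (or onto $\calH_H(\be, \bd)$ itself), thereby forcing irreducibility. A clean way is to use the group action: fix a single convenient monomorphism $f^0$ (for each $x$, the standard inclusion $\Bbbk^{e_x} \hookrightarrow \Bbbk^{d_x}$) and consider the map sending a pair $(V, g)$, with $V \in \rep_H(\be)$ and $g \in \GL(\bd)$, to the triple $(V, g \ast W^0, g \circ f^0)$, where $W^0$ is the representation with $W^0_\alpha = \left[\begin{smallmatrix} V_\alpha & 0 \\ 0 & U_\alpha \end{smallmatrix}\right]$ for a freely chosen cokernel part $U$. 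The domain is a product of irreducible varieties, and the image should be all of $\calM_H(\be, \bd)$: given any monomorphism $f \colon V \to W$, its cokernel $U$ is a representation of $H$ (here one uses that $H$ is hereditary so no relation obstructs realizing the quotient), and the short exact sequence splits the arrow matrices into block-upper-triangular form via a suitable $g \in \GL(\bd)$, exactly as in the cocycle discussion preceding Proposition~\ref{prop:3.2}.

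The cleanest route, in fact, is to bypass mono-irreducibility and argue ext-irreducibility directly, since the cocycle description simplifies dramatically when $I = 0$. For a hereditary algebra the condition $Z \in \bbZ^{U,V}$ is vacuous: there are no relations $\rho \in I$, so $\bbZ^{U,V} = \bbA_Q^{\be, \bd}$ is a full affine space. Therefore $\calE_H(\be, \bd)$ is simply the product $\rep_H(\be) \times \rep_H(\bd) \times \bbA_Q^{\be, \bd}$, each factor being an affine space (again because $H$ has no relations). A finite product of affine spaces is irreducible, so $\calE_H(\be, \bd)$ is irreducible for all $\bd, \be$, which is precisely ext-irreducibility by definition.

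The main obstacle, and the point I would verify most carefully, is the claim that $\bbZ^{U,V} = \bbA_Q^{\be, \bd}$ and that $\rep_H(\bd)$ is a full affine space; both rest on the identity $I = 0$ for hereditary algebras. Concretely, I must confirm that ``hereditary'' here means the path algebra of a quiver \emph{with no relations} (so the bound quiver presentation has $I = 0$), rather than merely global dimension at most one, and that the quiver is acyclic or otherwise admits such a presentation in the framework of this paper. Once that is pinned down, the argument collapses to the observation that a product of affine spaces is irreducible, and no delicate surjectivity or group-theoretic argument is needed. Thus I expect the proof to be very short, with essentially all the content residing in recognizing that the defining equations $Z^{U,V}_\rho = 0$ disappear in the absence of relations.
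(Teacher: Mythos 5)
Your final argument (the ``cleanest route'') is exactly the paper's proof: for $H = \Bbbk Q$ hereditary there are no relations, so $\bbZ^{U,V} = \bbA_Q^{\be,\bd}$ and $\calE_H(\be,\bd) = \rep_H(\be) \times \rep_H(\bd) \times \bbA_Q^{\be,\bd}$ is an affine space, hence irreducible. The preliminary detour through mono-irreducibility and Proposition~\ref{prop:3.2} is unnecessary, as you yourself conclude, and your caveat about the meaning of ``hereditary'' is settled by the paper's own convention (path algebra of a quiver with no relations).
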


\begin{proof}
Let $A$ be a hereditary algebra. Then $A = \Bbbk Q$, for some quiver $Q$. Consequently, if we fix dimension vectors $\bd$ and $\be$, then $\rep_A (\bd)$ and $\rep_A (\be)$ are affine spaces, and $\bbZ^{U, V} = \bbA_Q^{\be, \bd}$, for all representations $V \in \rep_A (\bd)$ and $U \in \rep_A (\be)$. Consequently, $\calE_A (\be, \bd) = \rep_A (\be) \times \rep_A (\bd) \times \bbA_Q^{\be, \bd}$ is an affine space, thus irreducible.
\end{proof}

As a consequence we obtain the first part of our classification result.

\begin{lemm} \label{prop:ext.irr}
Every simple loop extension of a hereditary algebra is ext-irreducible.
\end{lemm}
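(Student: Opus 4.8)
The plan is to realise the variety $\calE_A(\be, \bd)$ as a direct product of smaller varieties whose irreducibility is already available, thereby reducing the statement to the single-loop algebras $\Lambda_m$ (Corollary~\ref{coro:4.2}) and to the hereditary quotient (Lemma~\ref{lemm:ext.irr}). Write $A = \Bbbk Q / I$ with $Q$ the Gabriel quiver, let $L \subseteq Q_0$ be the set of vertices carrying a loop (there is at most one, by definition of a simple loop extension), denote by $\varepsilon_x$ the loop at $x \in L$, and set $H := A / \langle Q_1^{(0)} \rangle$, the hereditary quotient.

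First I would pin down the relations. Since every generator of $I$ involves only loops and there is at most one loop per vertex, each generator is a polynomial without constant or linear term in a single $\varepsilon_x$. I would then observe that boundedness of $(Q,I)$ forces some power $\varepsilon_x^N$ to lie in $I$; projecting $\Bbbk Q$ onto the span of the loop-paths at $x$ shows that this power already lies in the ideal of the polynomial ring $\Bbbk[\varepsilon_x]$ generated by the relations at $x$. As $\Bbbk[\varepsilon_x]$ is a principal ideal domain and the generator of that ideal divides $\varepsilon_x^N$, it is a pure power $\varepsilon_x^{m_x}$ with $m_x \geq 2$. Hence $V \in \rep_A(\bd)$ if and only if $V_{\varepsilon_x}^{m_x} = 0$ for every $x \in L$, the components $V_\beta$ at degree-$1$ arrows $\beta$ being unconstrained; the same holds for $U$.

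Next I would decompose $\calE_A(\be, \bd)$ accordingly. For the relation $\varepsilon_x^{m_x}$ the cocycle depends only on the matrices attached to $\varepsilon_x$: using the block form of $W^{V,Z,U}_{\varepsilon_x}$ together with the identity $Z^{U,V}_{\varepsilon_x^{l}} = \sum_{i=0}^{l-1} V_{\varepsilon_x}^{l-1-i} Z_{\varepsilon_x} U_{\varepsilon_x}^{i}$ established in the proof of Lemma~\ref{lemm:4.1}\eqref{point:ext}, the condition $Z \in \bbZ^{U,V}$ becomes $\sum_{i=0}^{m_x-1} V_{\varepsilon_x}^{m_x-1-i} Z_{\varepsilon_x} U_{\varepsilon_x}^{i} = 0$ for each $x \in L$ and imposes nothing on the degree-$1$ components $Z_\beta$. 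Reorganising coordinates, the triple $(U_{\varepsilon_x}, V_{\varepsilon_x}, Z_{\varepsilon_x})$ is precisely a point of $\calE_{\Lambda_{m_x}}(e_x, d_x)$, while the remaining data form a point of $\calE_H(\be, \bd)$, and the resulting map
\[
\calE_A(\be, \bd) \;\longrightarrow\; \Bigl( \prod_{x \in L} \calE_{\Lambda_{m_x}}(e_x, d_x) \Bigr) \times \calE_H(\be, \bd)
\]
is an isomorphism of varieties.

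Finally I would conclude. Each factor $\calE_{\Lambda_{m_x}}(e_x, d_x)$ is irreducible since $\Lambda_{m_x}$ is ext-irreducible by Corollary~\ref{coro:4.2} (with $m = m_x \geq 2$), and $\calE_H(\be, \bd)$ is irreducible by Lemma~\ref{lemm:ext.irr}; as a finite product of irreducible varieties over the algebraically closed field $\Bbbk$ is irreducible, $\calE_A(\be, \bd)$ is irreducible for all $\be, \bd$, so $A$ is ext-irreducible. The main obstacle is the structural step of the second paragraph: showing that the relations at each loop collapse to a single power $\varepsilon_x^{m_x}$, for which boundedness and the principal-ideal structure of $\Bbbk[\varepsilon_x]$ are essential. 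Once this is in place, the product decomposition and the appeal to irreducibility of the factors are routine.
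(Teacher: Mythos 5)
Your proof is correct and follows essentially the same route as the paper: both decompose $\calE_A(\be,\bd)$ as the product of $\calE_H(\be,\bd)$ with the factors $\calE_{\Lambda_{m_x}}(e_x,d_x)$ coming from the loops, and then invoke Lemma~\ref{lemm:ext.irr} and Corollary~\ref{coro:4.2}. The only difference is that you carefully justify, via the principal-ideal structure of $\Bbbk[\varepsilon_x]$, the normalization $I=\langle \varepsilon_x^{m_x}: x\in L\rangle$, which the paper simply asserts with a ``Note that''.
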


\begin{proof}
Let $A = \Bbbk Q / I$ be a simple loop extension of a hereditary algebra. Let $H$ be the path algebra of the quiver $(Q_0, Q_1^{(1)})$. Next, for each loop $\varepsilon$, let $m_\varepsilon$ be the minimal $m$ such that $\varepsilon^m \in I$, and $\Lambda_\varepsilon := k [x] / (x^{m_\varepsilon})$. Note that $I = \langle \varepsilon^{m_\varepsilon} : \text{$\varepsilon \in Q_1^{(0)}$} \rangle$.

Let $\bd$ and $\be$ be dimension vectors. Then
\[
\calE_A (\be, \bd) = \calE_H (\be, \bd) \times \prod_{\varepsilon \in Q_1^{(0)}} \calE_{\Lambda_\varepsilon} (e_{t \varepsilon}, d_{s \varepsilon}).
\]
Since $H$ is ext-irreducible by Lemma~\ref{lemm:ext.irr}, and the algebras $\Lambda_\varepsilon$ are ext-irreducible by Corollary~\ref{coro:4.2}, $\calE_A (\be, \bd)$ is irreducible, and the claim follows.
\end{proof}

In order to prove that the simple loop extensions of hereditary algebras are the only ext-irreducible algebras, we need additional facts on relations in (geometrically irreducible) algebras. Let $Q$ be a quiver. Recall that a set $R$ of relations is called minimal, if $\langle R \rangle \neq \langle R \setminus \{ \rho \} \rangle$, for each $\rho \in R$. If $R$ is a set of relations, then for $x, y \in R$ we denote by $R_{x, y}$ the set of $\rho \in R$ such that $s \rho = x$ and $t \rho = y$. The first fact is the following generalization of~\cite{Bon1983}*{Proposition~1.2}.

\begin{prop} \label{prop:Bongartz}
Let $A = \Bbbk Q / I$ be a weakly triangular algebra. If $R$ is a minimal set of relations generating $I$, then the cardinality $\# R_{x, y}$ of $R_{x, y}$ is independent of $R$, for all $x, y \in Q_0$, $x \neq y$. More precisely,
\[
\# R_{x, y} = \dim_\Bbbk \Ext_A^2 (S_A (x), S_A (y)).
\]
\end{prop}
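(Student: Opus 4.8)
The plan is to establish the two displayed assertions in turn: first that $\#R_{x,y}$ is independent of the minimal set $R$ and equals a bimodule invariant of $I$, and then that this invariant computes $\Ext^2_A(S_A(x),S_A(y))$. Throughout I would write $E := \bigoplus_{x \in Q_0}\Bbbk\bone_x$ for the span of the trivial paths, $J$ for the arrow ideal, and use $I\subseteq J^2$. The strategy follows \cite{Bon1983}*{Proposition~1.2}; the only genuinely new input is the presence of loops, which is controlled by weak triangularity together with the restriction $x\neq y$.

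For the first step I would regard $I$ as a $\Bbbk Q$-bimodule and consider its bimodule top $I/(JI+IJ)$. Since every relation $\rho$ is by definition homogeneous with respect to source and target (so $\rho=\bone_{t\rho}\,\rho\,\bone_{s\rho}$), this quotient is an $E$-bimodule and splits as $\bigoplus_{x,y}\bone_y\,(I/(JI+IJ))\,\bone_x$. A Nakayama-type argument—valid because $A$ is finite dimensional, so $I$ is finitely generated as a bimodule and $I/(JI+IJ)$ is finite dimensional—shows that a set of relations generating $I$ is minimal exactly when its images form a basis of $I/(JI+IJ)$, with the number landing in the $(x,y)$-component forced. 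This already yields independence of $\#R_{x,y}$ from $R$ together with $\#R_{x,y}=\dim_\Bbbk\bone_y\,(I/(JI+IJ))\,\bone_x$.

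The homological heart is the identification of the latter dimension with $\dim_\Bbbk\Ext^2_A(S_A(x),S_A(y))$ for $x\neq y$. I would build the start of a minimal projective resolution of $S_A(x)$: take $P_0=A\bone_x$ with $\Ker(P_0\to S_A(x))=J\bone_x$, and $P_1=\bigoplus_{\alpha\colon s\alpha=x}A\bone_{t\alpha}$ mapping onto $J\bone_x$ by right multiplication by $\alpha$, so that $\Omega^2 S_A(x)=\Ker(P_1\to P_0)$ consists of the tuples $(b_\alpha)$ with $\sum_\alpha b_\alpha\alpha=0$ in $A$, i.e.\ of the $\Bbbk$-linear relations among the arrows leaving $x$. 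Each relation $\rho\in R_{x,y}$ produces such a tuple: grouping the paths of $\rho$ by their initial ($x$-starting) arrow $\alpha$ gives $\rho=\sum_\alpha b_\alpha\alpha$, and $\rho\in I$ forces $(b_\alpha)\in\Omega^2 S_A(x)$, landing at vertex $y$. Because $I\subseteq J^2$ one has $b_\alpha\in J\bone_{t\alpha}$, so these tuples lie in $JP_1$, consistently with minimality, and the computation of the top $\Omega^2 S_A(x)/J\,\Omega^2 S_A(x)$ at vertex $y$ matches the bimodule quotient of the previous paragraph. Hence $P_2=\bigoplus_{\rho\in R,\ s\rho=x}A\bone_{t\rho}$, and $\Ext^2_A(S_A(x),S_A(y))\cong\Hom_A(P_2,S_A(y))$, whose dimension counts $R_{x,y}$.

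The main obstacle is making the correspondence ``relations $\leftrightarrow$ minimal generators of $\Omega^2 S_A(x)$ at vertex $y$'' a genuine bijection in the presence of loops. Here $\Bbbk Q$ is infinite dimensional and the loop part of the resolution of $S_A(x)$ is periodic, so a priori the second syzygy could acquire generators at $x$ unrelated to $R$; this is precisely what the hypotheses exclude off the diagonal. Since $A$ is weakly triangular, any relation with $x\neq y$ has positive degree and no oriented cycle returns from $y$ to $x$, so the vertex-$y$ component of $\Omega^2 S_A(x)/J\,\Omega^2 S_A(x)$ receives contributions only from the genuine relations in $R_{x,y}$, with no interference from the periodic loop syzygies (which live on the diagonal $x=y$). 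Verifying this linear independence and spanning—equivalently, that the map $R_{x,y}\to\bone_y\,(I/(JI+IJ))\,\bone_x$ is bijective—is the delicate point, and is exactly where weak triangularity and $x\neq y$ are used essentially.
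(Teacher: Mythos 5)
Your second paragraph is where the argument breaks, and it breaks irreparably as stated. You claim that over $\Bbbk Q$ a set of relations generating $I$ is minimal exactly when its residue classes form a basis of $I/(IJ+JI)$, and you justify the underlying ``Nakayama-type argument'' only by finite generation of $I$ and finite dimensionality of the top. That is not enough: Nakayama requires the sub-bimodule $IJ+JI$ to be \emph{superfluous} in $I$ (equivalently, contained in the radical of $I$ as a bimodule), which holds when $J$ is nilpotent in $\Bbbk Q$ (no oriented cycles) but fails precisely in the presence of loops --- and loops are the whole point here. Concretely, let $Q$ be the one-loop quiver, so $\Bbbk Q = \Bbbk[\varepsilon]$, and let $I = (\varepsilon^2)$. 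Then $R = \{\varepsilon^2+\varepsilon^3,\ \varepsilon^5\}$ is a minimal set of relations generating $I$: it generates, since $(\varepsilon^2+\varepsilon^3)(1-\varepsilon+\varepsilon^2-\varepsilon^3) = \varepsilon^2-\varepsilon^6$ and $\varepsilon^6 = \varepsilon\cdot\varepsilon^5$; and neither element suffices alone, because $1+\varepsilon$ is not invertible in $\Bbbk[\varepsilon]$, so $\varepsilon^5 \notin \langle \varepsilon^2+\varepsilon^3\rangle$. Yet $\dim_\Bbbk I/(IJ+JI) = \dim_\Bbbk (\varepsilon^2)/(\varepsilon^3) = 1$, and the class of $\varepsilon^5$ in the top is zero. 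So your asserted equivalence is false; note also that $IJ+JI$ is visibly not superfluous here, since $\langle \varepsilon^2+\varepsilon^3\rangle + (IJ+JI) = I$ while $\langle \varepsilon^2+\varepsilon^3\rangle \neq I$. Since your paragraph nowhere uses weak triangularity or $x\neq y$, it would equally prove the $x=y$ statement that this example refutes ($\Bbbk[\varepsilon]/(\varepsilon^2)$ is weakly triangular).

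You do sense the difficulty in your last paragraph, but you locate it inside the homological identification (the syzygies of $S_A(x)$) and then explicitly leave it unproved (``verifying this linear independence and spanning \ldots{} is the delicate point''). That deferred point is not a detail: it is the entire content of the proposition beyond the formula $\dim_\Bbbk\Ext^2_A(S_A(x),S_A(y)) = \dim_\Bbbk \bone_y(I/(IJ+JI))\bone_x$, which the paper simply cites from Bongartz and which your third paragraph re-derives. The paper's actual resolution is to pass to $A' := \Bbbk Q/\langle R^{(0)}\rangle$, where $R^{(0)}$ is the set of degree-$0$ relations in $R$: weak triangularity forces every sufficiently long path of $Q$ to lie in $\langle R^{(0)}\rangle$, so the arrow ideal becomes nilpotent in $A'$, the sub-bimodule $I'J'+J'I'$ really is the radical of the bimodule $I'$, and Nakayama now legitimately converts minimality of $R$ into linear independence of the classes of $R_{x,y}$; the hypothesis $x \neq y$ enters to guarantee that this quotient does not change the relevant component of the top, since any element of $\bone_y\langle R^{(0)}\rangle\bone_x$ already lies in $IJ+JI$ (one of the two bordering paths must have positive length). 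Without some device of this kind --- killing the loop relations first, or otherwise restoring superfluousness --- your outline cannot be completed.
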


\begin{proof}
Let $J$ be the ideal in $\Bbbk Q$ generated by the arrows. Then according to~\cite{Bon1983}*{Corollary~1.2}
\[
\dim_\Bbbk \Ext_A^2 (S_A (x), S_A (y)) = \dim_\Bbbk \bone_y (I / (I J + J I)) \bone_x.
\]
Next, let $R^{(0)}$ be the set of relations of degree $0$ in $R$, $A' = \Bbbk Q / \langle R^{(0)} \rangle$, $I' = I / \langle R^{(0)} \rangle$, and $J' = J / \langle R^{(0)} \rangle$. Since $x \neq y$,
\[
\dim_\Bbbk \bone_y (I / (I J + J I)) \bone_x = \dim_\Bbbk \bone_y (I' / (I' J' + J' I')) \bone_x.
\]
Thus in order to finish the proof it suffices to show
\[
\# R_{x, y} = \dim_\Bbbk \bone_y (I' / (I' J' + J' I')) \bone_x.
\]
In other words, it is enough to show that the residue classes of the elements of $R_{x, y}$ form a basis of $\bone_y (I' / (I' J' + J' I')) \bone_x$.

One easily sees that the residue classes of the elements of $R_{x, y}$ span $\bone_x (I' / (I' J' + J' I')) \bone_y$ as a vector space (see for example the proof of the corresponding statement in the proof of~\cite{MarSalSte2021}*{Theorem~4.1}). Thus assume the residue classes of the elements of $R_{x, y}$ are linearly dependent. This implies that there exists $\rho \in R_{x, y}$ such that $\rho \in K' + I' J' + J' I'$, where $K' := K / \langle R^{(0)} \rangle$ and $K := \langle R \setminus \{ \rho \} \rangle$. Consequently, $K' + I' J' + J' I' = I'$. Note that $I' J' + J' I'$ is the radical of $A'$ as an $A'$-$A'$-bimodule (here we use that $A$ is weakly triangular, hence there exists $n \in \bbN_+$ with $\sigma \in \langle R^{(0)} \rangle$, for each path $\sigma$ in $Q$ of length $n$). Thus the Nakayama lemma (see for example~\cite{AndFul1992}*{Corollary~15.13}) implies $I' = K'$, i.e., $I = K$, which contradicts the minimality of $R$.
\end{proof}

Before we continue we formulate an easy generalization of the results of~\cite{BobSch2019a}*{Subsection~5.5}.

\begin{lemm} \label{lemm:degone}
Let $Q$ be the quiver
\[
\xymatrix{
0 \ar@(ul,ur)^{\varepsilon_0}
&
1 \ar@(ul,ur)^{\varepsilon_1} \ar@<-1ex>[l]_{\alpha_{11}} \ar@<1ex>^{\alpha_{1n_1}}_{\cdots}[l]
&
2 \ar@(ul,ur)^{\varepsilon_2} \ar@<-1ex>[l]_{\alpha_{21}} \ar@<1ex>^{\alpha_{2n_2}}_{\cdots}[l]
&
\cdots \ar@<-1ex>[l]_{\alpha_{31}} \ar@<1ex>^{\alpha_{3n_3}}_{\cdots}[l]
&
k \ar@(ul,ur)^{\varepsilon_k} \ar@<-1ex>[l]_{\alpha_{k1}} \ar@<1ex>^{\alpha_{kn_k}}_{\cdots}[l]
},
\]
where $k, n_1, \ldots, n_k \in \bbN_+$. If $A'$ is a geometrically irreducible algebra, whose Gabriel quiver $Q'$ is a subquiver of $Q$, and $R'$ is a minimal set of relations such that $A' \simeq \Bbbk Q' / \langle R' \rangle$, then $\deg \rho \in \{ 0, 1 \}$, for each $\rho \in R'$.
\end{lemm}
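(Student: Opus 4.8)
The plan is to translate the statement into a combinatorial condition on the vertex labels and then contradict geometric irreducibility by exhibiting a reducible representation variety. First I would record two elementary facts about $Q$. Since $A'$ is geometrically irreducible it is weakly triangular by Lemma~\ref{lemm:geoirr}\eqref{point:geoirr1}, so Proposition~\ref{prop:Bongartz} applies. Moreover, in $Q$ the only degree-$0$ arrows are the loops $\varepsilon_i$ (which preserve the vertex label), every degree-$1$ arrow $\alpha_{ij}$ lowers the label by exactly $1$, and no arrow raises the label; hence any path from $a$ to $b$ uses precisely $a-b$ arrows of degree $1$, so $\deg\sigma=a-b$ for every path $\sigma$ and therefore $\deg\rho=s\rho-t\rho$ for every relation $\rho$. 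Thus the assertion $\deg\rho\in\{0,1\}$ for all $\rho\in R'$ is equivalent to $R'_{x,y}=\emptyset$ whenever $x-y\geq 2$. By Proposition~\ref{prop:Bongartz} the number $\#R'_{x,y}=\dim_{\Bbbk}\Ext_{A'}^2(S_{A'}(x),S_{A'}(y))$ is independent of $R'$ for $x\neq y$, so it suffices to rule out a single minimal relation of degree $\geq 2$.

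Suppose for contradiction that such a relation exists, and choose $\rho$ with $s\rho=x$, $t\rho=y$ and $d:=x-y\geq 2$ minimal. The key structural point is that $d\geq 2$ forces an intermediate vertex $w$ with $y<w<x$, and every path occurring in $\rho$ passes through $w$. I would exploit this by taking a dimension vector $\bd$ supported on the interval $[y,x]$, with $d_z=1$ for all $y<z<x$ and $d_x=d_y=2$. On such $\bd$ the loops at the intermediate vertices are nilpotent operators on a line, hence zero, and every path from $x$ to $y$ factors through the one-dimensional spaces $V_z$ ($y<z<x$); consequently each $M_\sigma$ is (a scalar times) a rank-$\leq 1$ map $V_x\to V_y$, and the relation $M_\rho=0$ becomes a factorization/rank condition. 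The aim is to show that this condition splits $\rep_{A'}(\bd)$ into proper closed pieces.

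The model case to keep in mind is the loop-free quiver with vertices $0,1,2$, arrows $R,S\colon 1\to 0$ and $P,Q\colon 2\to 1$, bound by $RP-SQ$, at $\bd=(2,1,2)$; here $\rep_{A'}(\bd)=\{(R,S,P,Q):RP=SQ\}$ with $R,S\in\bbM_{2\times 1}(\Bbbk)$ and $P,Q\in\bbM_{1\times 2}(\Bbbk)$. Projecting to $(R,S)$, linearly independent columns $R,S$ force $P=Q=0$, so $\{P=0,\,Q=0\}$ is a closed subset of dimension $4$; on the other hand the closure of $\{P\neq 0\}$ (on which $S$ is a multiple of $R$ and $Q$ the matching multiple of $P$) has dimension $5$. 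These two proper closed subsets cover $\rep_{A'}(\bd)$, which is therefore reducible, contradicting geometric irreducibility. Note that a degree-$1$ relation admits no intermediate vertex to shrink, which is precisely why the argument excludes only degrees $\geq 2$; the general step reproduces this dichotomy after shrinking an intermediate vertex of $Q'$.

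The main obstacle is to carry this out for an arbitrary minimal relation rather than the model one, and there are two complications. First, $\rho$ is in general a linear combination of several paths and may contain loops at the endpoints $x$ and $y$, which survive on $\bd$ as nilpotent $2\times 2$ matrices coupling into the equation $M_\rho=0$; I would handle these by fibering $\rep_{A'}(\bd)$ over the endpoint-loop coordinates and over the arrows not appearing in $\rho$, and checking that the factorization dichotomy persists fibrewise. Second, the remaining relations of $R'$ supported on $[y,x]$ also cut $\rep_{A'}(\bd)$; by minimality of $x-y$ these are of degree $\leq 1$ together with loop relations, and one must verify that intersecting with them leaves both covering subsets proper and nonempty of the expected dimensions. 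This bookkeeping---choosing $w$ and $\bd$ so that the rank-$\leq 1$ behaviour of $M_\rho$ genuinely exhibits $\rep_{A'}(\bd)$ as a union of two proper closed subsets---is the essential content, and is exactly the point at which the technique of~\cite{BobSch2019a}*{Subsection~5.5} is generalized.
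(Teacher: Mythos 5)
There is a genuine gap: your proposal is a strategy with one worked example, not a proof. You reduce the lemma to showing that a minimal relation of degree at least $2$ forces some $\rep_{A'} (\bd)$ to be reducible, verify this for a single model algebra (the loop-free quiver $0 \leftarrow 1 \leftarrow 2$ with two parallel arrows at each step and the single relation $RP - SQ$, at $\bd = (2, 1, 2)$ --- this computation is correct: the sets $\{ P = Q = 0 \}$ and the closure of $\{ P \neq 0 \}$ are indeed proper closed subsets covering the variety), and then explicitly defer everything else to ``bookkeeping.'' But that deferred part is the entire content of the statement. A general relation of degree $2$ in this quiver is a linear combination of paths $\varepsilon_0^a \alpha_{1 i} \varepsilon_1^b \alpha_{2 j} \varepsilon_2^c$; with your dimension vector the loops at the endpoints act as arbitrary nilpotent $2 \times 2$ matrices and couple into the equation, so the rank-one dichotomy of the model case does not persist verbatim. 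Moreover your claim that, by minimality of $x - y$, the remaining relations supported on $[y, x]$ have degree at most $1$ (or are loop relations) is false as stated: $R'_{x, y}$ may contain several relations, all of the same degree $x - y \geq 2$, and they all cut the variety simultaneously. Carrying out the covering-by-proper-closed-subsets argument in this generality is essentially a re-proof of \cite{BobSch2019a}*{Proposition~5.4}, which in that paper is a substantial case analysis, not a routine extension of your model computation.

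You also miss the reduction that makes this lemma genuinely easy, and which is the paper's actual route. Since $Q'$ is a subquiver of the fixed chain-with-loops quiver $Q$, one enlarges $A'$ to $A := \Bbbk Q / \langle R \rangle$, where $R$ consists of $R'$ together with relations $\varepsilon_i^{m_i}$ for the loops of $Q$ missing from $Q'$. For every dimension vector, $\rep_A (\bd)$ is then the product of a representation variety of $A'$, varieties of nilpotent matrices (which are irreducible), and affine spaces corresponding to the arrows of $Q$ absent from $Q'$, because the added relations do not interact with $R'$. Hence $A$ is geometrically irreducible, its Gabriel quiver is all of $Q$, and $R$ is still a minimal set of relations; now \cite{BobSch2019a}*{Proposition~5.4} applies directly and yields $\deg \rho \in \{ 0, 1 \}$ for all $\rho \in R$, in particular for all $\rho \in R'$. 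In short, the intended proof outsources precisely the hard analysis you set out to redo; to make your approach work you would have to actually complete the general case, not just the model one.
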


\begin{proof}
We adapt arguments from the proof of~\cite{BobSch2019a}*{Corollary~5.5}: Let $R'^{(0)}$ be the set of relations of degree $0$ in $R'$. Without loss of generality, we may assume $R'^{(0)} := \{ \varepsilon_i^{m_i} : \text{$i \in Q_0''$} \}$, for some $m_i \geq 2$, $i \in Q_0''$, where $Q_0''$ is the set of $i \in [0, k]$ such that $\varepsilon_i \in Q_1'$. Choose $m_i \geq 2$, for each $i \in Q_0 \setminus Q_0''$, and let $R := R' \cup \{ \varepsilon_i^{m_i} : \text{$i \in Q_0 \setminus Q_0''$} \}$. If $A := \Bbbk Q / \langle R \rangle$, then $A'$ being geometrically irreducible implies $A$ is geometrically irreducible. Consequently, the claim follows from~\cite{BobSch2019a}*{Proposition~5.4}.
\end{proof}

The proof of the following fact uses ideas from the proof of~\cite{BobSch2019b}*{Lemma~5.1}.

\begin{prop} \label{prop:degone}
Let $A$ be a geometrically irreducible algebra with Gabriel quiver $Q$, and let $R$ be a minimal set of relations such that $A \simeq \Bbbk Q / \langle R \rangle$. If $\deg \rho \neq 1$, for each $\rho \in R$, then $\deg \rho = 0$, for each $\rho \in R$.
\end{prop}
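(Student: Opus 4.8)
The plan is to translate the statement into a vanishing of $\Ext^2$ between simples and then obstruct it via Lemma~\ref{lemm:degone}. The first move is a clean dichotomy for the endpoints of a relation. I claim that for any relation $\rho$ we have $s\rho = t\rho$ if and only if $\deg\rho = 0$. Indeed, if $s\rho = t\rho = x$, then each path occurring in $\rho$ is an oriented cycle at $x$ of positive length; weak triangularity (Lemma~\ref{lemm:geoirr}\eqref{point:geoirr1}) forbids oriented cycles of positive degree, so each such cycle has degree $0$, and since there is at most one loop at $x$ (Lemma~\ref{lemm:geoirr}\eqref{point:geoirr2}) each term is a power of that loop; hence $\deg\rho = 0$. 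Conversely a degree‑$0$ term is a product of loops, which can never change vertices, forcing $s\rho = t\rho$. Under the hypothesis that no $\rho \in R$ has degree $1$, it follows that every relation of positive degree has degree $\geq 2$ and connects \emph{distinct} vertices. Thus it suffices to prove $R_{x,y} = \emptyset$ for all $x \neq y$, and by Proposition~\ref{prop:Bongartz} this is equivalent to $\dim_\Bbbk \Ext_A^2(S_A(x), S_A(y)) = 0$ for all $x \neq y$; crucially, this count is independent of the chosen minimal $R$.

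Next I would argue by contradiction. Suppose $\Ext_A^2(S_A(x), S_A(y)) \neq 0$ for some $x \neq y$, i.e.\ there is a minimal relation $\rho$ of degree $\geq 2$ from $x$ to $y$; among all such choices I would take one minimizing the degree, and secondarily the distance from $x$ to $y$ along non‑loop arrows. A term of minimal degree in $\rho$ exhibits a chain of non‑loop arrows $x = u_0 \to u_1 \to \cdots \to u_d = y$ with $d \geq 2$, so at least one intermediate vertex occurs. The goal is to produce an auxiliary geometrically irreducible algebra $A'$ whose Gabriel quiver is a subquiver of the linear template quiver of Lemma~\ref{lemm:degone}, and in which $\rho$ persists as a genuine relation of degree $\geq 2$. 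Once this is achieved, Lemma~\ref{lemm:degone} forces every relation of $A'$ to have degree in $\{0,1\}$, which contradicts the survival of $\rho$ and completes the proof: $R_{x,y} = \emptyset$ for all $x \neq y$ then yields $\deg\rho = 0$ for every $\rho \in R$.

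The construction of $A'$ has two layers. The first is restriction to the relevant vertices: I would pass to the full convex subcategory on the interval of vertices lying on routes from $x$ to $y$, appealing to the (standard, but to‑be‑verified) facts that a convex subcategory of a geometrically irreducible algebra is geometrically irreducible and that $\Ext^2$ between the two boundary simples, equivalently the relation count of Proposition~\ref{prop:Bongartz}, is computed inside this subcategory. The second, and genuinely delicate, layer is \emph{linearization}: the convex interval need not be a chain, since parallel routes $x \to w_1 \to y$ and $x \to w_2 \to y$ (a "diamond") or long shortcut arrows $x \to y$ may be present, and such a quiver is \emph{not} a subquiver of the template in Lemma~\ref{lemm:degone}. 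Here the hypothesis of no degree‑$1$ relations is what I expect to exploit, to guarantee that no intermediate degree‑$1$ relation ties a shortcut arrow to $\rho$ and thereby lowers the degree of $\rho$ when the diamonds are collapsed and the shortcut arrows are removed.

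The hard part will be exactly this linearization step: collapsing parallel routes and deleting long arrows while simultaneously (a) retaining geometric irreducibility of the resulting algebra and (b) ensuring that $\rho$ neither vanishes nor degenerates to degree $\leq 1$. Deleting arrows is a passage to a factor algebra, which does not preserve geometric irreducibility in general, so the delicate point is to perform the reduction only along deletions that are controlled by geometric irreducibility of $A$ together with the absence of degree‑$1$ relations. I anticipate that, once a genuinely linear geometrically irreducible algebra carrying a relation of degree $\geq 2$ is produced, Lemma~\ref{lemm:degone} closes the argument immediately; the entire technical weight of the proof lies in arranging that reduction.
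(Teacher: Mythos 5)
Your opening reductions are fine: the dichotomy (for a relation $\rho$ in a weakly triangular algebra with at most one loop per vertex, $s\rho = t\rho$ if and only if $\deg\rho = 0$) is correct, as is the reformulation via Proposition~\ref{prop:Bongartz}, and you correctly identify Lemma~\ref{lemm:degone} as the tool that must finish the argument. But your proof has a genuine gap exactly where you say it does: the ``linearization'' step is never carried out, and it is not a routine verification --- it is the entire content of the proposition. Two specific obstacles are left unresolved. First, a single minimal relation from $x$ to $y$ may have summands passing through different intermediate vertices, so the set of vertices lying on routes from $x$ to $y$ genuinely does not carry a subquiver of the template quiver of Lemma~\ref{lemm:degone}, and no choice of $\rho$ ``minimizing degree and distance'' changes this. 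Second, as you note yourself, deleting arrows is a quotient operation that does not preserve geometric irreducibility in general, and you supply no mechanism that controls it; likewise you give no argument that $\rho$ survives in the quotient as a relation of degree at least $2$ (it could fall into the ideal generated by the surviving degree-$0$ relations).

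The missing idea, which is how the paper resolves both obstacles at once, is to decompose each relation by the \emph{support} of its summands, $\rho = \sum_{X \subseteq Q_0} \rho_X$, and to induct on $\# X$, proving that each component $\rho_X$ lies in $\langle R^{(0)} \rangle$, the ideal generated by the degree-$0$ relations; the hypothesis $\deg\rho \neq 1$ enters only to settle the case $\# X = 2$. The inductive hypothesis allows one to replace each $\rho$ by $\rho - \sum_{Y \colon \# Y < \# X} \rho_Y$, so that all lower-support components vanish. This replacement is precisely what makes arrow-deletion harmless: order the vertices of $X$ as $x_0, \ldots, x_l$ using weak triangularity, keep only the loops and the arrows $x_i \to x_{i-1}$, and compare $A'' := A / \langle \bone_{Q_0 \setminus X} \rangle$ (obviously geometrically irreducible, since its representation varieties are representation varieties of $A$ with zero dimensions outside $X$) with the linearized algebra $A'$; after the replacement, the restrictions of all relations to the full subquiver on $X$ involve only the kept arrows, so $\rep_{A''} (\bd)$ is the product of $\rep_{A'} (\bd)$ with an affine space, and geometric irreducibility passes to $A'$. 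Only then does Lemma~\ref{lemm:degone} apply: any minimal generating set of relations for $A'$ has all degrees in $\{0, 1\}$, while the available generators have degree $0$ or at least $2$, forcing $\rho_X \in \langle R^{(0)} \rangle$; minimality of $R$ then concludes. Without the support decomposition and this induction, your sketch does not constitute a proof.
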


\begin{proof}
Let $R^{(0)}$ be the set of relations of degree $0$ in $R$ and $I := \langle R^{(0)} \rangle$. Our aim is to show that $\rho \in I$, for each $\rho \in R \setminus R^{(0)}$.

For a path $\sigma = \alpha_1 \cdots \alpha_l$ in $Q$, let $\supp \sigma := \{ t (\alpha_1), s (\alpha_1), \ldots, s (\alpha_l) \}$. Note that $\deg \sigma = \# \supp \sigma - 1$, since $A$ is weakly triangular by Lemma~\ref{lemm:geoirr}\eqref{point:geoirr1}. If $\rho = \sum_{i = 1}^k \lambda_i \sigma_i$ is a relation in $R$, we write $\rho = \sum_{X \subseteq Q_0} \rho_X$, where $\rho_X := \sum_{i : \supp \sigma_i = X} \lambda_i \sigma_i$, for $X \subseteq Q_0$. By induction on $\# X$ we show $\rho_X \in I$, for each $\rho \in R \setminus R^{(0)}$. The claim is obvious if $\# X = 1$ (indeed, weak triangularity implies that $\rho_X = \rho \in R^{(0)}$, if $\# X = 1$ and $\rho_X \neq 0$) and there is nothing to prove if $\# X = 2$ (if $\# X = 2$ and $\rho_X \neq 0$, then $\deg \rho = 1$ by weak triangularity).

Now fix $X \subseteq Q_0$ with $\# X > 2$, and assume $\rho_Y \in I$, for each $\rho \in R \setminus R^{(0)}$ and $Y$ with $\#Y < \# X$. By replacing every $\rho \in R \setminus R^{(0)}$ by $\rho - \sum_{Y : \# Y < \# X} \rho_Y$ (note that $(\rho - \sum_{Y : \# Y < \# X} \rho_Y)_X = \rho_X$) we may assume
\renewcommand{\theequation}{$*$}
\begin{equation} \label{equation}
\text{$\rho_Y = 0$, for each $\rho \in R \setminus R^{(0)}$ and $Y$ with $\#Y < \# X$}.
\end{equation}

We number the elements $x_0$, \ldots, $x_l$ of $X$ in such a way that there are no paths from $x_i$ to $x_j$ in $Q$, for $i < j$ (here we use again that $A$ is weakly triangular). Let $Q_1'$ be the union of the set of all loops at $x_0$, \ldots, $x_l$ and the set of arrows $\alpha \in Q_1$ such that $s \alpha = x_i$ and $t \alpha = x_{i - 1}$, for some $i \in [1, l]$. If $Q' := (X, Q_1')$, then $\rho_X$ is a linear combination of paths in $Q'$, for each $\rho \in R \setminus R^{(0)}$. Let $A' := A / \langle  (\bone_{Q_0 \setminus X} \rangle + \langle Q_1 \setminus Q_1' \rangle)$. We show that $A'$ is geometrically irreducible.

First observe that $A' \simeq \Bbbk Q' / I'$, where $I'$ is the ideal in $\Bbbk Q'$ generated by the restrictions $\rho |_{Q'}$ of $\rho \in R$ to $Q'$. If $\rho \in R^{(0)}$, then either $\rho |_{Q'} = 0$ (if $s \rho \not \in X$) or $\rho |_{Q'} = \rho$ (if $s \rho \in X$). On the other hand, if $\rho \in R \setminus R^{(0)}$, then assumption~\eqref{equation} implies $\rho |_{Q'} = \rho_X$. Consequently, $I' = \langle R' \rangle$, where
\[
R' := \{ \rho \in R^{(0)} : \text{$s \rho \in X$} \} \cup \{ \rho_X : \text{$\rho \in R \setminus R^{(0)}$} \}.
\]

Now let $A'' := A / \langle  \bone_{Q_0 \setminus X} \rangle$. Then obviously $A''$ is geometrically irreducible. Moreover, $A'' \simeq \Bbbk Q'' / I''$, where $Q''$ is the full subquiver of $Q$ with the vertex set $X$ and $I''$ is the ideal in $\Bbbk Q''$ generated by the restrictions $\rho |_{Q''}$ of $\rho \in R$ to $Q''$. Obviously, $\rho |_{Q''} = \rho |_{Q'}$, if $\rho \in R^{(0)}$. Furthermore, if $\rho \in R \setminus R^{(0)}$, then $\rho |_{Q''} = \rho_X$, again by assumption~\eqref{equation}. In other words, $I'' = \langle R' \rangle$. Consequently, $\rep_{A''} (\bd)$ is the product of $\rep_{A'} (\bd)$ and an affine space, for each dimension vector $\bd$, hence $A''$ being geometrically irreducible implies $A'$ is geometrically irreducible as well.

Now we apply Lemma~\ref{lemm:degone} to $A'$ (note that $A'$ being geometrically irreducible implies that there is at most one loop at each vertex of $Q'$ by Lemma~\ref{lemm:geoirr}\eqref{point:geoirr2}) and conclude that $\rho_X$ belongs to the ideal generated by the relations in $R^{(0)} \cap R'$, for each $\rho \in R \setminus R^{(0)}$. Consequently, $\rho_X \in I$, for each $\rho \in R \setminus R^{(0)}$, and this finishes the proof.
\end{proof}

Now we are able to conclude a classification of the ext-irreducible algebras.

\begin{theo}
Up to Morita equivalence, an algebra $A$ is ext-irreducible if and only if $A$ is a simple loop extension of a hereditary algebra.
\end{theo}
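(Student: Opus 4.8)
The implication that every simple loop extension of a hereditary algebra is ext-irreducible is Lemma~\ref{prop:ext.irr}, so only the converse needs a proof. Assume $A = \Bbbk Q / I$ is ext-irreducible. Taking $\be = \bzero$ gives $\calE_A (\bzero, \bd) \cong \rep_A (\bd)$, so $A$ is geometrically irreducible; hence by Lemma~\ref{lemm:geoirr}\eqref{point:geoirr1} and~\eqref{point:geoirr2} the algebra $A$ is weakly triangular and carries at most one loop at each vertex. Fix a minimal set $R$ of relations generating $I$. By Proposition~\ref{prop:degone} it then suffices to show that $R$ contains no relation of degree $1$: for then every $\rho \in R$ has degree $0$, so $A$ is a loop extension of a hereditary algebra, and the single-loop condition makes it a simple one. (Proposition~\ref{prop:Bongartz} gives the complementary viewpoint: for weakly triangular $A$, being a loop extension is equivalent to $\Ext_A^2 (S_A (x), S_A (y)) = 0$ for all $x \neq y$.)

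The plan is to reduce the non-existence of degree-$1$ relations to a two-vertex problem. Suppose, for contradiction, that some $\rho \in R$ has degree $1$, say $s \rho = x$ and $t \rho = y$ with $x \neq y$; then $\rho$ is supported on the full subquiver with vertex set $X := \{ x, y \}$, using an arrow $x \to y$ and the loops at $x$ and $y$. Put $\bar A := A / \langle \bone_{Q_0 \setminus X} \rangle$. For dimension vectors supported on $X$ one checks directly that $\calE_{\bar A} (\bar\be, \bar\bd) \cong \calE_A (\be, \bd)$ (extend by zero: the cocycle conditions coming from relations not supported on $X$ become vacuous). Hence $\bar A$ is again ext-irreducible, in particular geometrically irreducible; it is connected (the arrow in $\rho$ joins $x$ and $y$) and has exactly two simples. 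Moreover $\rho$ survives in $\bar A$ as an essential degree-$1$ relation: it is supported on $X$, and by minimality of $R$ at least one of its terms $\varepsilon_y^a \alpha \varepsilon_x^b$ has both exponents below the nilpotency indices of the loops, so $\rho$ does not lie in the ideal generated by the loop relations.

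Now I would invoke the already-established classification. By Theorem~\ref{theo:main}, the connected two-vertex geometrically irreducible algebra $\bar A$ is $A (n, m, 1)$, $A (n, m, m - 1)$, or $A' (n, m_0, m_1)$. The last family consists of loop extensions (its only relations, $\varepsilon_0^{m_0}$ and $\varepsilon_1^{m_1}$, have degree $0$), contradicting the essential degree-$1$ relation $\rho$; thus $\bar A \in \{ A (n, m, 1), A (n, m, m - 1) \}$. Since the extra arrows $\alpha_2, \ldots, \alpha_n$ appear in no relation, their module and cocycle components are unconstrained, whence $\calE_{A (n, m, l)} (\be, \bd) \cong \calE_{A (1, m, l)} (\be, \bd) \times \bbA^N$ for some $N \geq 0$. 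Therefore ext-irreducibility of $\bar A$ is equivalent to ext-irreducibility of $A (1, m, 1)$ or of $A (1, m, m - 1)$, and it remains to rule these out.

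The crux, and the step I expect to be the main obstacle, is to prove that $A (1, m, 1)$ and $A (1, m, m - 1)$ are \emph{not} ext-irreducible; by Proposition~\ref{prop:3.2} it is equivalent, and technically more convenient, to show they are not mono-irreducible. Here the degree-$1$ relation enters through its cocycle form: for $A (1, m, 1)$ the relation $\rho^{(1)}$ contributes the single matrix equation
\[
Z_{\varepsilon_0} U_{\alpha_1} + V_{\varepsilon_0} Z_{\alpha_1} + Z_{\alpha_1} U_{\varepsilon_1} + V_{\alpha_1} Z_{\varepsilon_1} = 0,
\]
coupling the arrow-cocycle $Z_{\alpha_1}$ to the loop actions, while for $A (1, m, m - 1)$ the relation $\rho^{(m-1)}$ yields precisely the extension cocycle of $\Lambda$-modules computed in Lemma~\ref{lemm:4.1}\eqref{point:ext}. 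The plan is to choose explicit dimension vectors $\be \leq \bd$ (necessarily with some vertex-dimension at least $m$, so that the loop relations $\varepsilon_i^m$ are genuinely active) and to split the monomorphism variety into two irreducible strata separated by a discrete invariant --- for instance the dimension of the intersection of $\Im Z_{\alpha_1}$ with the radical filtration induced by the loops --- which cannot degenerate into one another, thereby producing two components of competing dimension. Carrying out this component-and-dimension bookkeeping in the presence of the nilpotent loops is the delicate part; the reductions above are routine once it is in place.
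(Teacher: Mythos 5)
Your overall strategy coincides with the paper's own: pass to the two-vertex quotient $B := A / \langle \bone_{Q_0 \setminus \{x, y\}} \rangle$, invoke the classification of Theorem~\ref{theo:main}, rule out the loop-extension family $A' (n, m_0, m_1)$ via Proposition~\ref{prop:Bongartz}, and then derive a contradiction by showing that the surviving candidates $A (n, m, 1)$ and $A (n, m, m - 1)$ are not mono-irreducible. But your proposal stops exactly where the real work begins: you never prove that these algebras fail to be mono-irreducible. What you offer is a plan (``choose explicit dimension vectors \dots\ split the monomorphism variety into two irreducible strata separated by a discrete invariant''), and you yourself flag the required ``component-and-dimension bookkeeping'' as the part not carried out. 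This deferred step is precisely the mathematical content of the theorem: both $A (n, m, 1)$ and $A (n, m, m - 1)$ \emph{are} geometrically irreducible, so no soft argument can separate them from the ext-irreducible algebras. For comparison, the paper settles it with a small, uniform witness: with $l = 2$ for $A (n, m, 1)$ and $l = m$ for $A (n, m, m - 1)$, take $\be = (1, 1)$ and $\bd = (1, l)$. Then $\calM_B (\be, \bd)$ is described by scalars $\lambda, \mu_1, \ldots, \mu_n$ and matrices $V_i \in \bbM_{1 \times l} (\Bbbk)$, $U \in \bbM_{l \times l} (\Bbbk)$, $W \in \bbM_{l \times 1} (\Bbbk)$ subject to $V_1 U^{l - 1} = 0$, $U^l = 0$, $U W = 0$, $\lambda \mu_i = V_i W$, with $\lambda \neq 0 \neq W$. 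The open subsets $\calU_1 := \{ \rk U = l - 1 \}$ and $\calU_2 := \{ \mu_1 \neq 0 \}$ are nonempty, yet disjoint: if $\rk U = l - 1$ and $U^l = 0$, then $\Ker U = \Im U^{l - 1}$, hence $\Im W \subseteq \Ker U = \Im U^{l - 1} \subseteq \Ker V_1$, so $\lambda \mu_1 = V_1 W = 0$ and $\mu_1 = 0$. Thus $\calM_B (\be, \bd)$ is reducible. Note in particular that your expectation that a witness ``necessarily'' involves a vertex dimension at least $m$ is off: for $A (n, m, 1)$ the dimension vector $(1, 2)$ works for every $m$.

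Two smaller inaccuracies in your reduction. First, a degree-$1$ relation $\rho$ need not be ``supported on the full subquiver with vertex set $X$'': it may have higher-degree summands passing through other vertices, and what survives in $B$ is only its restriction to $X$; one must argue that this restriction is still an essential relation. Second, plain minimality of $R$ does not by itself yield your claim that some term $\varepsilon_y^a \alpha \varepsilon_x^b$ of $\rho$ has both exponents below the nilpotency indices of the loops; this is why the paper works with a \emph{normalized} minimal set of relations (which exists here because a geometrically irreducible algebra is a quotient of a simple loop extension of a hereditary algebra), and it is also what makes the appeal to Proposition~\ref{prop:Bongartz} clean. Both points are repairable --- weak triangularity forbids paths from $y$ back to $x$, so the pure loop powers already lie in $\langle R \setminus \{\rho\} \rangle$ --- but as written they are assertions rather than arguments, and in any case the genuine gap remains the missing reducibility proof above.
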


Before giving the proof we need some preparations. Let $A$ be the path algebra of a bound quiver $(Q, I)$. For each loop $\alpha$, let $m_\alpha$ be the minimal $m \in \bbN_+$ such that $\alpha^m \in I$. A minimal set $R$ of relations generating $I$ is called normalized if, for each loop $\alpha$, $\alpha^{m_\alpha}$ is the unique element of $R$ which has a summand containing $\alpha^{m_\alpha}$ as a subpath (in particular, $\alpha^{m_\alpha} \in R$). In general a normalized minimal set of relations generating $I$ may not exist (this is, for example, the case for $A = \Bbbk [X, Y] / (X^2 - Y^2, X^4)$). However, it obviously exists if $A$ is a quotient of a simple loop extension of a hereditary algebra.

Now let $A = \Bbbk Q / I$ be a geometrically irreducible algebras. According to~Lemma~\ref{lemm:geoirr}\eqref{point:geoirr2}, for each vertex $x$ there is at most one loop $\alpha$ such that $s \alpha = x$. Moreover, $A$ is weakly triangular by Lemma~\ref{lemm:geoirr}\eqref{point:geoirr1}. Consequently, $A$ is a quotient of a simple loop extension of a hereditary algebra. In particular, there exists a normalized minimal set of relations generating $I$.

\begin{proof}
We already know from Lemma~\ref{prop:ext.irr} that the simple loop extensions of hereditary algebras are ext-irreducible. Thus we need to show that if $A = \Bbbk Q / I$ is ext-irreducible, then $A$ is a simple loop extension of a hereditary algebra. Recall from Proposition~\ref{prop:3.2} that $A$ is mono-irreducible. Moreover, $A$ is geometrically irreducible ($\rep_A (\bd) = \calE_A (\bzero, \bd)$).

Let $R$ be a normalized minimal set of generators of $I$. According to Proposition~\ref{prop:degone} it suffices to show $\deg \rho \neq 1$, for all $\rho \in R$. Suppose this is not the case and fix $\rho \in R$ with $\deg \rho = 1$. Put $x := t \rho$ and $y := s \rho$. If $B := A / \langle \bone_{Q_0 \setminus \{ x, y \}} \rangle$, then $B$ is also mono- and geometrically irreducible. Moreover, $B$ has exactly two simples, hence by Theorem~\ref{theo:main} (or~\cite{BobSch2019b}*{Theorem~1.1}) we may assume $B = A (n, m, 1)$ or $B = A (n, m, m - 1)$, for some $n \in \bbN_+$ and $m \geq 2$ (the cases of $n = 0$ and $B = A' (n, m_0, m_1)$ are excluded by Proposition~\ref{prop:Bongartz}, since from $R$ being normalized it follows that $\rho$ induces a relation of degree $1$ in $B$, which cannot be generated by relations of degree $0$). In order to treat the above cases simultaneously, let $l := 2$ in the former case, and $l := m$ in the latter one.

Let $\bd := (1, l)$ and $\be := (1, 1)$. Then $\calM_B (\be, \bd)$ consist of the tuples
\[
\xymatrix@R=3\baselineskip{%
k \ar@(ul,dl)_{[0]} \ar[d]_{[\lambda]} & k \ar@(ur,dr)^{[0]} \ar@<-1.5ex>_{[\mu_1]}[l] \ar@<1.5ex>^{[\mu_n]}_{\vdots}[l] \ar[d]^W
\\
k \ar@(ul,dl)_{[0]} & k^l \ar@(ur,dr)^{U} \ar@<-1.5ex>_{V_1}[l] \ar@<1.5ex>^{V_n}_{\vdots}[l]
}
\]
where $\mu_1, \ldots, \mu_n, \lambda \in \Bbbk$, $V_1, \ldots, V_n \in \bbM_{1 \times l} (\Bbbk)$, $U \in \bbM_{l \times l} (\Bbbk)$, $W \in \bbM_{l \times 1} (\Bbbk)$, such that
\[
V_1 U^{l - 1} = 0, \; U^l = 0, \; \lambda \mu_1 = V_1 W, \; \ldots, \lambda \mu_n = V_n W, \; U W = 0,
\]
and $\lambda$ and $W$ are nonzero.

Let $\calU_1$ be the set of the tuples as above such that $\rk U = l - 1$ and $\calU_2$ be set of the tuples such that $\mu_1 \neq 0$. They are easily seen to be nonempty open subsets of $\calM_B (\be, \bd)$. If $\calM_B (\be, \bd)$ were irreducible, $\calU_1$ and $\calU_2$ would have a nonempty intersection. However, $V_1 U^{l - 1} = 0$ and $U W = 0$, imply $\Im U^{l - 1} \subseteq \Ker V_1$ and $\Im W \subseteq \Ker U$. Moreover, if $\rk U = l - 1$, then $U^l = 0$ implies $\Ker U = \Im U^{l - 1}$, hence $\Im W \subseteq \Ker V_1$. Consequently, $\lambda \mu_1 = V_1 W = 0$. Since $\lambda \neq 0$, $\mu_1 = 0$, i.e., $\calU_1 \cap \calU_2 = \varnothing$. Thus $\calM_B (\be, \bd)$ is reducible, a contradiction.
\end{proof}

\bibsection

\begin{biblist}

\bib{AndFul1992}{book}{
   author={Anderson, F. W.},
   author={Fuller, K. R.},
   title={Rings and Categories of Modules},
   series={Grad. Texts in Math.},
   volume={13},
   edition={2},
   publisher={Springer, New York},
   date={1992},
   pages={x+376},
}

\bib{Bob2021a}{article}{
   author={Bobi\'{n}ski, G.},
   title={Algebras with irreducible module varieties III: Birkhoff varieties},
   journal={Int. Math. Res. Not. IMRN},
   date={2021},
   number={4},
   pages={2497--2525},
}

\bib{BobSch2019a}{article}{
   author={Bobi\'{n}ski, G.},
   author={Schr\"{o}er, J.},
   title={Algebras with irreducible module varieties I},
   journal={Adv. Math.},
   volume={343},
   date={2019},
   pages={624--639},
}

\bib{BobSch2019b}{article}{
   author={Bobi\'{n}ski, G.},
   author={Schr\"{o}er, J.},
   title={Algebras with irreducible module varieties II: Two vertex case},
   journal={J. Algebra},
   volume={518},
   date={2019},
   pages={384--411},
}

\bib{Bon1983}{article}{
   author={Bongartz, K.},
   title={Algebras and quadratic forms},
   journal={J. London Math. Soc. (2)},
   volume={28},
   date={1983},
   number={3},
   pages={461--469},
}

\bib{Bon1991}{article}{
   author={Bongartz, K.},
   title={A geometric version of the Morita equivalence},
   journal={J. Algebra},
   volume={139},
   date={1991},
   number={1},
   pages={159--171},
}

\bib{Bon1994}{article}{
   author={Bongartz, K.},
   title={Minimal singularities for representations of Dynkin quivers},
   journal={Comment. Math. Helv.},
   volume={69},
   date={1994},
   number={4},
   pages={575--611},
}

\bib{MarSalSte2021}{article}{
   author={Margolis, S.},
   author={Saliola, F. V.},
   author={Steinberg, B.},
   title={Cell complexes, poset topology and the representation theory of algebras arising in algebraic combinatorics and discrete geometry},
   journal={Mem. Amer. Math. Soc.},
   volume={274},
   date={2021},
   number={1345},
   pages={xi+135},
}

\end{biblist}

\end{document}